\pgfplotsset{compat=1.17}
\crefname{subsection}{Subsection}{Subsections}
\crefname{figure}{Figure}{Figures}
\renewcommand{\div}{\textup{div}\mspace{2mu}}
\newcommand{\dd}{\textup{d}}
\def\dx{\,\textup{d}x}
\def\RR{\mathbb{R}}
\def\eps{\varepsilon}
\def\dt#1{#1_t}
\def\dtt#1{#1_{tt}}
\newcommand{\ddt}{\frac{\textup{d}}{\textup{d}t}}
\newcommand{\ds}{\,\textup{d}s}
\renewcommand{\cal}{\mathcal}
\newcolumntype{M}[1]{>{\centering\arraybackslash}m{#1}}
\journal{}
\renewcommand{\cite}{\citep}
\begin{document}

\setcounter{page}{1}

\title{Well-posedness, long-time behavior, and discretization of some models of nonlinear acoustics in velocity--enthalpy formulation}
\author[1,2]{Herbert Egger}
\author[1]{Marvin Fritz}
\authormark{EGGER \textsc{and} FRITZ}
\titlemark{Well-posedness and stability of some models of nonlinear acoustics}
\address[1]{\orgdiv{Computational Methods for PDEs}, \orgname{Johann Radon Institute for Computational and Applied Mathematics}, \orgaddress{\state{Linz}, \country{Austria}}}
\address[2]{\orgdiv{Institute of Numerical Mathematics}, \orgname{Johannes Kepler University}, \orgaddress{\state{Linz}, \country{Austria}}}
\def\myalpha{\eta}
\corres{Marvin Fritz, Johann Radon Institute for Computational and Applied Mathematics, Linz, Austria. \email{marvin.fritz@ricam.oeaw.ac.at}}

\abstract[Abstract]{%
We study a class of models for nonlinear acoustics, including the well-known Westervelt and Kuznetsov equations, as well as a model of Rasmussen that can be seen as a thermodynamically consistent modification of the latter. 
Using linearization, energy estimates, and fixed-point arguments, we establish the existence and uniqueness of solutions that, for sufficiently small data, are global in time and converge exponentially fast to equilibrium. 
In contrast to previous work, our analysis is based on a velocity-enthalpy formulation of the problem, whose weak form reveals the underlying port-Hamiltonian structure. Moreover, the weak form of the problem is particularly well-suited for a structure-preserving discretization.
This is demonstrated in numerical tests, which also highlight typical characteristics of the models under consideration.}

\keywords{Nonlinear acoustics; Westervelt equation, Kutznetsov equation, Rasmussen model; global well-posedness; exponential stability; port-Hamiltonian systems; energy inequalities; velocity-enthalpy formulation}

\jnlcitation{\cname{%
\author{Egger H}, and
\author{Fritz M}}.
\ctitle{Global well-posedness and long-time stability of some models of nonlinear acoustics.}}
\maketitle
\thispagestyle{empty}

\counterwithin{lemma}{section}
\counterwithin{equation}{section}

\section{Introduction} 
\label{sec:intro}
Models for nonlinear acoustics are usually derived via certain approximations of the compressible Euler equations; see  \cite{hamilton1997nonlinear,kaltenbacher2007numerical} for instance. As a general model governing the velocity potential $\psi$, we consider the equation
\begin{align} \label{eq:model} 
(1- a \dt \psi) \dtt \psi - c_0^2 \Delta \psi = b \Delta \dt \psi +  c \nabla \psi \cdot \nabla \dt \psi + d \dt \psi \Delta \psi.
\end{align}
Here, $c_0$ denotes the speed of sound in the unperturbed medium, $b$ is the coefficient of thermoviscous damping, and $a,c,d$ are further model parameters describing the nonlinear effects in the medium. 
Several well-established models of nonlinear acoustics are covered by \eqref{eq:model}, e.g., the Westervelt equation ($a=(\gamma+1)/c_0^2$, $c=d=0$), the Kuznetsov equation ($a=(\gamma-1)/c_0^2$, $c=2$, $d=0$), or its modification by Rasmussen ($a=(\gamma-2)/c_0^2$, $c=2$, $d=1$);
see \cite{jordan2016survey,rasmussen2009thermoviscous} for a comparison and detailed derivations.

\medskip 
\noindent 
\textbf{Motivation and related work.}
Due to its seemingly superior accuracy~\cite{christov2007modeling}, the Kuznetsov equation has been widely used in the engineering community; see~\cite{abramov2019high,dreyer2000investigations,hoffelner2001finite,kaltenbacher2002use} and the references given there.
In contrast to the underlying compressible Euler system and the Westervelt equation, it,
however, lacks thermodynamic consistency, that is, nonphysical energy production cannot be ruled out rigorously. 
This was the motivation for Rasmussen et al.~to propose a modification that has similar accuracy, but provably encodes the passivity of the system; see ~\cite{rasmussen2009thermoviscous,rasmussen2010analytical,rassmusen2011interacting} for details. 
In our numerical tests, we observe that this model also has somewhat superior stability properties.
The mathematical analysis of the Westervelt and Kuznetsov equations has been comprehensively addressed by Kaltenbacher et al.~\cite{kaltenbacher2011well,kaltenbacher2012analysis,kaltenbacher2011west,kaltenbacher2018fundamental}, who employed the velocity potential formulation. 
Further analytical results based on maximal $L^p$-regularity were obtained in \cite{meyer2011optimal,meyer2013global} for analytical results , and \cite{tani2017mathematical} addressed the Cauchy problem for a related model.
The well-posedness of other models of nonlinear acoustics, such as the Blackstock equation, the third-order Moore--Gibson--Thompson equation, and models with nonlocal-in-time dissipation were considered in \cite{fritz2018well,kaltenbacher2024kuznetsov,marchand2012abstract,tu2023well}.
In \cite{dorich2024robust,nikolic2019priori}, an apriori error analysis for appropriate discretization schemes was presented, \cite{meliani2024mixed} explored a second-order mixed formulation,  \cite{kaltenbacher2024first} studied a different first-order system, \cite{kaltenbacher2021convergence,kaltenbacher2015efficient} considered the convergence of time discretization methods, and \cite{nikolic2023asymptotic} investigated asymptotic-preserving finite element methods for the Westervelt equation. 
\clearpage

\noindent 
\textbf{Scope and main contributions.}
In this paper, we investigate the velocity-enthalpy formulation of \eqref{eq:model}, see Section~\ref{sec:prelim}, and study the global existence and uniqueness of solutions, as well as their long-time behavior.
For parameters $c=2d$, which includes the Westervelt and Rasmussen equations, the weak form of this model reveals an underling port-Hamiltonian structure, thus guaranteeing passivity and thermodynamic consistency of the system.
In addition, the weak form of the velocity--enthalpy formulation turns out to be very well suited for a structure-preserving discretization which inherits the elementary dissipation mechanism and basic stability properties from the continuous problem. Some preliminary computational results will be presented for illustration of these findings.

\medskip 
\noindent 
\textbf{Outline.}
The remainder of the paper is organized as follows: 
In \cref{sec:prelim}, we briefly derive and formally state the velocity-enthalpy formulation of \eqref{eq:model}, introduce our notation, and present our main theoretical results, whose proof is elaborated in the following sections. 
A linearized problem is investigated in \cref{sec:lin} and local existence of a unique solution is proven in \cref{sec:local}. The global existence and decay of solutions are finally established in \cref{sec:global}.
In \cref{sec:num}, we discuss the structure-preserving discretization of the problem through mixed finite elements and an implicit time-stepping scheme. We further present some numerical tests which illustrate the passivity and the exponential decay of solutions on the discrete level.

\section{Preliminaries and main results}
\label{sec:prelim}

Let us start with giving a complete definition of the model problem to be considered in the rest of the manuscript. 
We assume that $c_0^2=1$ in the sequel, which can always be achieved by appropriate scaling of space and time. We further introduce the velocity $v=-\nabla \psi$ and the total specific enthalpy $h=\dt \psi$. Then \eqref{eq:model} can be rewritten equivalently as 
\begin{alignat}{2}
\dt v + \nabla h &= 0,  \qquad && \text{in } \Omega, \ t \ge 0, \label{eq:sys1}\\
(1-a h) \, \dt h + \div v &= b \Delta h - c \, v \cdot \nabla h - d \, h \, \div v \qquad && \text{in } \Omega, \ t\ge 0. \label{eq:sys2} 
\intertext{The system is complemented by appropriate boundary and initial conditions. For ease of notation, we choose}
h &= 0 \qquad && \text{on } \partial\Omega, \ t\ge 0, \label{eq:sys3}  \\
h(0) &= h_0, \qquad v(0) = v_0 \qquad && \text{in } \Omega. \label{eq:sys4}
\end{alignat}
Other types of boundary conditions could be treated with minor modification of our arguments below. 
Throughout the manuscript, we make the following assumptions about the problem data required for our analysis.
\begin{assumption} \label{ass:1}
$\Omega \subset \RR^n$, $n \le 3$, is a bounded domain with $C^{3,1}$-boundary. In addition, $a,c,d \ge 0$ and $b>0$ are constants.
\end{assumption}
Suitable conditions on the initial values are stated explicitly in the statement of our results below. 
The smoothness assumption on the domain is used to employ regularity theorems for the Poisson problem and in turn esatlish regularity of solution to the nonlinear acoustics equations; see e.g. \cite[Theorem 20.4]{Wloka} and \cite[Ch~6.3]{evans2010partial}. 
Throughout the text, we use standard notation for function spaces and norms; see, for instance, \cite{evans2010partial,Wloka}.  
For ease of notation, we will use 
$$
(f,g)_\Omega=\int_\Omega f(x)g(x) \dx
$$ 
as abbreviation for the the scalar product of $L^2(\Omega)$. Further notation will be introduced where required.

\subsubsection*{Weak formulation and energy dissipation}

As a first step of our analysis, we present a weak characterization of solutions, which allows us to establish the basic power balance of the problem and which serves as the starting point for the design of structure-preserving numerical schemes later on. 
\begin{proposition} \label{pro:weak}
Let $(v,h)$ be a sufficiently smooth solution of \eqref{eq:sys1}--\eqref{eq:sys4} on $[0,T]$ for some $T>0$. Then 
\begin{align}
(\dt v,w)_\Omega + (\nabla h,w)_\Omega &= 0 \label{eq:weak1}\\
((1-a h) \dt h,q)_\Omega - (v,\nabla q)_\Omega &= -\, b (\nabla h, \nabla q)_\Omega - (c-d) (v \cdot \nabla h, q)_\Omega + d (h, v \cdot \nabla q)_\Omega \label{eq:weak2}
\end{align}
for all $w \in L^2(\Omega)^n$ and $q \in H_0^1(\Omega)$, and all $0 \le t \le T$. Moreover,
\begin{align} \label{eq:diss}
\ddt \mathcal{H}(v,h) &= -b \|\nabla h\|^2_{L^2} + (2d-c) (v \cdot \nabla h, h)_\Omega,
\end{align}
where $\mathcal{H}(v,h) := \int_\Omega \tfrac{1}{2} |v|^2 + \tfrac{1}{2} h^2 - \tfrac{a}{3} h^3 \, \textup{d}x$ is the Hamiltonian, i.e., the total energy of the system.
For $c=2d$, the energy thus decreases monotonically over time. 
\end{proposition}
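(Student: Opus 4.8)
The plan is to verify the two weak identities directly from the strong form and then to obtain the power balance \eqref{eq:diss} by inserting the solution itself as a test function.

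First I would establish \eqref{eq:weak1} by simply pairing \eqref{eq:sys1} with an arbitrary $w \in L^2(\Omega)^n$; since both $\dt v$ and $\nabla h$ lie in $L^2(\Omega)^n$ for a sufficiently smooth solution, no integration by parts is needed and the identity is immediate. For \eqref{eq:weak2} I would test \eqref{eq:sys2} with $q \in H_0^1(\Omega)$ and integrate by parts. The terms $(\div v, q)_\Omega$ and $b(\Delta h, q)_\Omega$ yield $-(v, \nabla q)_\Omega$ and $-b(\nabla h, \nabla q)_\Omega$, the boundary contributions vanishing because $q = 0$ on $\partial\Omega$. The genuinely instructive step is the treatment of $-d(h\,\div v, q)_\Omega$: integrating by parts once more and again using $q = 0$ on the boundary gives
$$ -d(h\,\div v, q)_\Omega = d(v, \nabla(hq))_\Omega = d(v\cdot\nabla h, q)_\Omega + d(h, v\cdot\nabla q)_\Omega. $$
Combining the first summand with the convective term $-c(v\cdot\nabla h, q)_\Omega$ produces the coefficient $-(c-d)$, while the second summand yields the remaining term $d(h, v\cdot\nabla q)_\Omega$, exactly as claimed.

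Finally, to obtain \eqref{eq:diss} I would differentiate $\mathcal{H}$ under the integral sign. The cubic term differentiates to $-a h^2 \dt h$, so that
$$ \ddt \mathcal{H}(v,h) = (v, \dt v)_\Omega + ((1-ah)\,\dt h, h)_\Omega, $$
where the factor $(1-ah)h = h - a h^2$ matches the left-hand side of \eqref{eq:weak2}. The key observation is that the boundary condition \eqref{eq:sys3} makes $h$ itself an admissible test function in $H_0^1(\Omega)$. I would then insert $w = v$ into \eqref{eq:weak1} and $q = h$ into \eqref{eq:weak2}; the two cross terms $(\nabla h, v)_\Omega$ and $(v, \nabla h)_\Omega$ cancel, the diffusive term contributes $-b\|\nabla h\|_{L^2}^2$, and the convective terms combine, since $-(c-d)+d = 2d-c$, into $(2d-c)(v\cdot\nabla h, h)_\Omega$. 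Setting $c = 2d$ annihilates this last term and leaves $\ddt \mathcal{H} = -b\|\nabla h\|_{L^2}^2 \le 0$.

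Since the argument is formal for smooth solutions, there is no serious analytical obstacle; the points requiring care are the two integrations by parts, where the homogeneous boundary condition is used to discard the boundary integrals, and the verification that $h \in H_0^1(\Omega)$, which legitimizes the choice $q = h$ and is precisely what makes the energy identity close up.
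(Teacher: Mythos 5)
Your proposal is correct and follows essentially the same route as the paper: derive the weak identities by testing and integrating by parts (your direct integration by parts on $-d(h\,\div v,q)_\Omega$ is just the paper's pointwise rewriting $-d\,h\,\div v = d\,v\cdot\nabla h - d\,\div(h v)$ carried out under the integral), and then obtain \eqref{eq:diss} by differentiating $\mathcal{H}$ and inserting $(w,q)=(v,h)$. No gaps; the bookkeeping of the coefficients $-(c-d)$ and $2d-c$ and the use of $h\in H_0^1(\Omega)$ as a test function match the paper's argument.
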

\begin{proof}
We note that the last term in \eqref{eq:sys2} can be rewritten as 
\begin{align*}
-\, d \, h \, \div v = d \, v \cdot \nabla h - d \, \div (h \, v).
\end{align*}
The variational identities \eqref{eq:weak1}--\eqref{eq:weak2} are then obtained in the usual manner, that is, by multiplying the equations \eqref{eq:sys1}--\eqref{eq:sys2} with appropriate test functions and integration over the domain $\Omega$. For the second term on the left-hand side of \eqref{eq:weak2}, we used integration-by-parts. The corresponding boundary term vanishes due to the conditions on the test function $q$. 
By formal differentiation of the energy functional, we see that 
\begin{align}
\frac{\dd}{\dd t} \mathcal{H}(v,h)
&= (v_t, v)_\Omega + ((1-a h) h_t, h)_\Omega.
\end{align}
The power balance \eqref{eq:diss} then follows immediately from \eqref{eq:weak1}--\eqref{eq:weak2} by testing with $(w,q)=(v,h)$. 
\end{proof}

\begin{remark}
Let us emphasize that the power balance \eqref{eq:diss} is a direct consequence of the particular structure of the variational identities~\eqref{eq:weak1}--\eqref{eq:weak2}, which can be preserved under Galerkin projection. 
We will use this fact in Section~\ref{sec:num} below. 
Let us further note that the energy functional $\mathcal{H}(v,h)$ is convex provided that $\|h\|_{L^\infty}$ is sufficiently small. 
\end{remark}

\subsubsection*{Well-posedness and exponential decay}

The natural energy $\mathcal{H}(v,h)$ of the problem is not strong enough to provide uniform bounds for $h$ in $L^\infty$ in space-time, which are required to demonstrate the well-posedness of the problem. 
For our analysis, we therefore 
utilize the stronger energy 
\begin{equation} \label{Eq:EnergyMath} 
\mathcal{E}(v,h) :=\|h\|_{H^2}^2 + \|v\|_{H^1_\div}^2 := \|h(t)\|_{H^2}^2 + \|v(t)\|_{H^1}^2 + \|\div v(t)\|_{H^1}^2.
\end{equation} 
Using standard convention, we write $H^\ell_\div(\Omega):=\{v \in H^\ell(\Omega)^n : \div v \in H^\ell(\Omega)\}$, $\ell\in \mathbb{N}$,
for the space of regular velocity fields and denote by $\|v\|_{H^\ell_\div} = (\|v\|_{H^\ell}^2 + \|\div v\|_{H^\ell}^2)^{1/2}$ its natural norm. 
We search for a solution $(v,h)$ in the space $\mathcal{V}_T \times \mathcal{H}_T$ with
\begin{align*} 
\mathcal{V}_T&= \{ v \in  C([0,T];H^1_\div(\Omega))  \cap C^1([0,T];H^1(\Omega)^n) : \div v = 0 \text{ on } \partial\Omega\},   \\ 
\mathcal{H}_T&= \{h \in L^2(0,T;H^3(\Omega))\cap C([0,T];H^2(\Omega)) \cap C^1([0,T];L^2(\Omega)) : h = \Delta h = 0 \text{ on } \partial\Omega\}.
\end{align*}
Here $C([0,T];X)$ and $L^p(0,T;X)$ denote the spaces of functions $f:[0,T] \to X$ of time with values in some normed space~$X$; see~\cite{Wloka} for details and properties of these spaces. 
If not stated otherwise, the norm for the intersection of two Banach spaces is defined by $\|\cdot\|_{X\cap Y} := \max\{\|\cdot\|_X,\|\cdot\|_Y\}$.
We are now in a position to state our main results.
\begin{theorem}[Well-posedness and energy decay] \label{thm:main} $ $ \\
Let Assumption~\ref{ass:1} hold. Then for any $T>0$ there exists an $\eps(T)>0$ such that for all initial values 
\begin{align*}
(v_0,h_0) \in H^1_\div(\Omega) \times H^2(\Omega)  \quad \text{with} \quad 
\div v_0 = h_0 = 0 \quad \text{on } \partial\Omega
\quad \text{and} \quad 
\mathcal{E}(v_0,h_0) \le \eps(T),
\end{align*}
there exists a unique (local) solution $(v,h) \in \mathcal{V}_T \times \mathcal{H}_T$ of problem \eqref{eq:sys1}--\eqref{eq:sys4}. 
Furthermore, if $\mathcal{E}(v_0,h_0)\le \eps_\infty$ with $\eps_\infty>0$  sufficiently small, then the solution exists (globally) for all $t>0$ and there exist constants $C_0,C_1>0$ such that
\begin{align}
\|h(t)\|_{H^2}^2 + \|\div v(t)\|_{H^1}^2 \le C_0 \, e^{-C_1 t} \qquad \forall t \ge 0.
\end{align}
\end{theorem}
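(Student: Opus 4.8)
The plan is to follow the three-step program announced for \cref{sec:lin}--\cref{sec:global}: linearize the coefficient and the convective fields, solve the resulting linear problem with quantitative energy bounds, close a fixed-point argument for local existence and uniqueness, and finally bootstrap to global existence and exponential decay via an a priori dissipation inequality. As a preparatory step I would keep the convective nonlinearities in the symmetric form already used in \cref{pro:weak}, so that the only genuinely nonlinear contributions are the coefficient $1-a h$ multiplying $\dt h$ and the transport terms $(c-d)\, v\cdot\nabla h$ and $d\, h\,\div v$. The smallness condition $\mathcal{E}(v_0,h_0)\le\eps(T)$ is used throughout to keep $1-a h$ uniformly positive via the embedding $H^2(\Omega)\hookrightarrow L^\infty(\Omega)$ (valid for $n\le 3$) and to absorb the nonlinear terms into the dissipation.

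\textbf{Linear problem.} First I would study the linearization in which the coefficient and the transport fields are frozen at given data $(\bar v,\bar h)$ from the solution space. Eliminating $v$ through $\dt v=-\nabla h$ shows that $h$ then satisfies, up to lower-order terms, $\dtt h-\Delta h = b\,\Delta \dt h$, i.e.\ a strongly-damped wave equation, so the term $b\,\Delta h$ in \eqref{eq:sys2} supplies parabolic smoothing. Using a Galerkin scheme together with elliptic regularity for the Poisson problem on the $C^{3,1}$-domain, I would establish existence, uniqueness, and an energy estimate controlling $\mathcal{E}(v,h)$ on $[0,T]$ in terms of the data and of $(\bar v,\bar h)$. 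This is the stage where the boundary conditions $h=\Delta h=0$ and $\div v=0$ enter, lifting $h$ to $H^3$ and $v$ to $H^1_\div$; the principal technical point is to propagate these regularities without losing derivatives, balancing the derivative loss in $v\cdot\nabla h$ against the smoothing from $b\,\Delta h$.

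\textbf{Local existence.} Next I would define the solution map $\Phi:(\bar v,\bar h)\mapsto(v,h)$ and invoke Banach's fixed-point theorem on a ball in $\mathcal{V}_T\times\mathcal{H}_T$. The linear energy estimate, combined with the smallness of $\mathcal{E}(v_0,h_0)$, shows that $\Phi$ maps this ball into itself for a suitable radius $R=R(T)$; contractivity is then verified in a weaker norm (a $C([0,T];L^2)$-type norm for the difference), where the loss of one derivative is harmless. This yields the unique local solution and fixes the threshold $\eps(T)$. For the global statement I would then derive an a priori inequality for the energy directly from the weak form \eqref{eq:weak1}--\eqref{eq:weak2}: testing the differentiated equations with appropriate higher-order multipliers produces, under smallness, an estimate of the form $\ddt\mathcal{E}\le -C_1\,\mathcal{E}+C\,\mathcal{E}^{3/2}$, so that $\mathcal{E}^{3/2}$ is absorbed into $-C_1\mathcal{E}$ and Gronwall's lemma, together with a standard continuation argument, delivers both global existence and the exponential bound.

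\textbf{Main obstacle.} I expect the decisive difficulty to be this last dissipation inequality. The natural energy $\mathcal{H}$ from \cref{pro:weak} only dissipates $\|\nabla h\|_{L^2}^2$, whereas the decay must be shown for $\|h\|_{H^2}^2+\|\div v\|_{H^1}^2$; crucially this excludes the full velocity $v$, which is \emph{undamped} (only $\div v$ and $\nabla h$ are controlled). The decay of the coupled pair $(h,\div v)$ has to be extracted from its hidden strongly-damped-wave structure, which typically forces a carefully weighted Lyapunov functional with cross terms pairing $h$ and $\div v$ and whose time derivative is coercive over the dissipation norm. Simultaneously absorbing the transport nonlinearities $v\cdot\nabla h$ and $h\,\div v$ into this dissipation, while keeping $1-a h$ uniformly positive, using only the smallness of $\mathcal{E}$ and the embedding $H^2\hookrightarrow L^\infty$, is the delicate heart of the argument.
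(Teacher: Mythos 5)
Your local theory is essentially the paper's: Section~\ref{sec:lin} likewise freezes the enthalpy in the nonlinear terms (a known coefficient plus a source $f$), solves the linear problem by a Galerkin scheme built on Laplacian eigenfunctions together with elliptic regularity on the $C^{3,1}$ domain, and Proposition~\ref{pro:local} closes a Banach fixed-point argument on balls of $\mathcal{V}_T\times\mathcal{H}_T$ under the smallness condition $\mathcal{E}(v_0,h_0)\le\eps(T)$. Two minor differences: the paper freezes only $h$ (its fixed-point map does not depend on $\tilde v$), and it obtains the contraction in the \emph{full} norm of $\mathcal{V}_T\times\mathcal{H}_T$ by viewing the difference of two iterates as a linearized problem with source $f=a\tilde h h_{2,t}-d\,\tilde h\,\div v_2-c\,\nabla\tilde h\cdot v_2$ and estimating $\|f\|_{L^2(H^1)}\le C(T)\,R\,\|\tilde h\|_{\mathcal{H}_T}$; your weaker-norm contraction is a standard alternative, at the price of verifying completeness of the ball under the weak metric.

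The substantive divergence is the global/decay part, and here your proposal contains a step that fails as written. The inequality $\ddt\mathcal{E}\le-C_1\mathcal{E}+C\mathcal{E}^{3/2}$ cannot hold for the paper's energy $\mathcal{E}$, which contains $\|v\|_{H^1}^2$: since $v_t=-\nabla h$, the curl of $v$ is conserved in time, so $\|v(t)\|_{H^1}$ stays bounded below by a multiple of $\|\nabla\times v_0\|_{L^2}$ and cannot decay, however small the data. You flag exactly this obstruction in your final paragraph, but the object you then need --- a cross-term Lyapunov functional for the pair $(h,\div v)$ with coercive derivative --- is precisely what you leave open, and your continuation argument for global existence, as written, still rests on the false inequality. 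The paper avoids constructing any such functional: Proposition~\ref{pro:global} proves $T$-independent integral estimates (multipliers $-\nabla\div v$, $-\Delta h$, $-\nabla\!\Delta h$, $\nabla h_t$, with smallness used for absorption), deduces global existence by a continuation/contradiction argument, and then notes that the same estimates started at any time $s$ give $\int_s^\infty E(r)\,\textup{d}r\le C_\infty E(s)$ for $E=\|h\|_{H^2}^2+\|\div v\|_{H^1}^2$, so exponential decay follows at once from the Haraux--Lagnese inequality. Your Lyapunov route is salvageable if you run it for $E$ rather than $\mathcal{E}$ --- the equation itself bounds $\|\div v\|_{H^1}$ by $\|h_t\|_{H^1}+\|h\|_{H^3}$, i.e., by the dissipation --- but then the global bound on $\|v\|_{H^1}$ needed for continuation in $\mathcal{V}_T$ must be recovered separately, e.g., from $v(t)=v_0-\int_0^t\nabla h\ds$ and the decay of $\|h\|_{H^2}$.
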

The detailed proof of these assertions is presented in the following three sections. 
For orientation of the reader, let us briefly mention the main ingredients for the analysis already here: 
The smallness condition on the initial data allows to guarantee that the coefficient $(1-\myalpha h)$ in front of the time derivative of \eqref{eq:sys2} remains bounded and uniformly positive. 
The existence of a unique solution local in time can then be established using Galerkin approximations, energy estimates, and a fixed-point argument. 
The latter is based on a linearization of the problem, discussed in Section~\ref{sec:lin}, while existence of a fixed-point is established in Section~\ref{sec:local}.
A careful derivation of additional energy estimates finally allows us to derive global bounds on the solution and the decay estimate follow via the Haraux--Lagnese inequality; see Section~\ref{sec:global} for details.

\section{A linearized problem}
\label{sec:lin}

In a first step of our analysis, we study the linear problem that arises from replacing $h$ in some of the terms in \eqref{eq:sys1}--\eqref{eq:sys2} by a known function $\myalpha$. We additionally add a source function $f$ which allows us to use the results in various ways later on.
Throughout this section, we thus consider the linear system 
\begin{alignat}{2}
v_t  +\nabla h &=0 \qquad  && \text{in } \Omega, \ t > 0, \label{eq:lin1}\\
h_t +\div v &= b\Delta h + a\myalpha h_t -  c\nabla \myalpha \cdot v  - d \myalpha \, \div v + f \qquad && \text{in } \Omega, \ t> 0,\label{eq:lin2}
\end{alignat}
which is again complemented by the initial and boundary conditions \eqref{eq:sys3}--\eqref{eq:sys4}. 
It is not difficult to see that sufficiently smooth solutions of this problem also satisfy the variational identities
\begin{align} 
(v_t,u)_\Omega &= -\,(\nabla h, u)_\Omega  \label{eq:lin1w}\\
(h_t,g)_\Omega + b (\nabla h,\nabla g)_\Omega  &=  (v,\nabla g)_\Omega +a(\myalpha  h_t,g)_\Omega - c(\nabla \myalpha \cdot v ,  g)_\Omega - d(\myalpha\,\div v,  g)_\Omega + (f,g)_\Omega, \label{eq:lin2w}
\end{align}
for all test functions $u \in L^2(\Omega)$ and $g \in H_0^1(\Omega)$, and for all $0 \le t \le T$.
By reversing the steps arising in the derivation of this weak form of the system, one can see that, vice versa, any sufficiently regular solution of \eqref{eq:lin1w}--\eqref{eq:lin2w} also satisfies the differential equations \eqref{eq:lin1}--\eqref{eq:lin2}.
The key results of this section are summarized in the following statement. 
\begin{proposition} \label{pro:lin}
Let Assumption~\ref{ass:1} hold, $T>0$, and further assume that  \\[-2em] 
\begin{itemize} \itemindent1em
\item $f \in L^2(0,T;H_0^1(\Omega))$;
\item $v_0 \in H^1_\div(\Omega)$, $h_0 \in H^2(\Omega)$ with $h_0=\div v_0=0$ on $\partial\Omega$;
\item $\myalpha \in L^2(0,T;H^3(\Omega)) \cap L^\infty(0,T;H^2(\Omega))$ with
		$\max\big\{ \|\myalpha\|_{L^\infty(L^\infty)},C_{H^1_0,L^4} \|\nabla \myalpha\|_{L^\infty(L^4)} \big\}< 1/a$, 
\end{itemize} \vspace{-1em}
where $C_{H^1_0,L^4}$ denotes the constant of the Sobolev embedding $id:H_0^1(\Omega) \to L^4(\Omega)$. \\
Then the linearized system \eqref{eq:lin1}--\eqref{eq:lin2} together with \eqref{eq:sys3}--\eqref{eq:sys4} admits a unique solution $(v,h) \in \mathcal{V}_T \times \mathcal{H}_T$ 
and 
\begin{align} \label{eq:linbound}
\|v\|_{\cal V_T}^2 + \|h\|_{\cal H_T}^2 
&\leq C_L \cdot \big(\|v_0\|^2_{H^1_\div} + \|h_0\|_{H^2}^2 + \|f\|_{L^2(H^1)}^2 \big), 
\end{align}
with constant $C_L = \hat C_1 \cdot \exp\big(\hat C_2 T + \hat C_3 \|\nabla \myalpha\|_{L^\infty(L^\infty) \cap L^2(W^{1,4})}\big)$ and $\hat C_i$ depending only on the bounds in the assumptions. 
\end{proposition}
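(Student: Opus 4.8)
The plan is to construct the solution by the Faedo--Galerkin method, to derive a hierarchy of energy estimates that are uniform in the discretization parameter, and to pass to the limit. First I would fix Galerkin spaces compatible with the boundary conditions: for the enthalpy I take the span $V_m=\mathrm{span}\{\phi_1,\dots,\phi_m\}$ of the first $m$ eigenfunctions of the Dirichlet Laplacian, which satisfy $\phi_k=\Delta\phi_k=0$ on $\partial\Omega$ (since $\Delta\phi_k=-\lambda_k\phi_k$) and hence reproduce both boundary conditions built into $\mathcal{H}_T$; for the velocity I use a matching conforming subspace $W_m\subset H^1_\div(\Omega)$ whose elements satisfy $\div w=0$ on $\partial\Omega$. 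Projecting \eqref{eq:lin1w}--\eqref{eq:lin2w} onto these spaces yields a linear system of ODEs for the coefficient vectors, whose mass matrix for $\p_t h_m$ is the Gram matrix of the modified inner product $((1-a\myalpha)\cdot,\cdot)_\Omega$. This matrix is uniformly positive definite because the hypothesis $\|\myalpha\|_{L^\infty(L^\infty)}<1/a$ guarantees $1-a\myalpha\ge\delta_0>0$; as the remaining coefficients depend only $L^2$-integrably on $t$ through $\myalpha$ and $f$, the Carath\'eodory theory provides a unique absolutely continuous Galerkin solution $(v_m,h_m)$ on all of $[0,T]$.

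The core of the argument is a set of a priori estimates for $(v_m,h_m)$, uniform in $m$, that reproduce the norm on the left-hand side of \eqref{eq:linbound}. I would proceed through a hierarchy of test functions. Testing the discrete analogue of \eqref{eq:lin1w} with $v_m$ and of \eqref{eq:lin2w} with $h_m$, the cross terms $\pm(\nabla h_m,v_m)_\Omega$ cancel and the coercivity of $(1-a\myalpha)$ controls the $L^2$-energy; testing instead with $\p_t v_m$ and $\p_t h_m$ and integrating $(v_m,\nabla\p_t h_m)_\Omega$ by parts in time yields $\|\p_t h_m\|_{L^2(L^2)}$ together with $\|\nabla h_m\|_{L^\infty(L^2)}$. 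To climb to the $H^2$- and $H^3$-norms I would exploit the boundary conditions $h=\Delta h=0$ and the $C^{3,1}$-regularity of $\Omega$, testing with suitable powers of the Dirichlet Laplacian applied to $h_m$ (admissible because the spectral basis is $\Delta$-invariant) and invoking elliptic regularity, which gives $\|h_m\|_{H^2}\lesssim\|\Delta h_m\|$ and $\|h_m\|_{H^3}\lesssim\|\nabla\Delta h_m\|$; here the fact that $H^2$ is a Banach algebra for $n\le 3$ lets me bound the coupling products $\myalpha\,\div v_m$ and $\nabla\myalpha\cdot v_m$. Finally, since $\p_t v_m=-\nabla h_m$ and hence $\div\,\p_t v_m=-\Delta h_m$, the velocity contributions are recovered by integration, $v_m(t)=v_0-\int_0^t\nabla h_m\ds$, so that $\|v_m\|_{H^1}$ and $\|\div v_m\|_{H^1}$ close the loop; note also that $\div v_m=0$ on $\partial\Omega$ is preserved, since $\Delta h_m$ vanishes there and $\div v_0=0$ on $\partial\Omega$ by assumption.

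Because these estimates are mutually coupled, I would collect them into a single quantity equivalent to $\|v_m\|_{\mathcal{V}_T}^2+\|h_m\|_{\mathcal{H}_T}^2$ and derive a differential inequality of Gronwall type whose multiplicative coefficients are precisely the norms of $\myalpha$ and $\nabla\myalpha$ appearing in the hypotheses; integrating it produces \eqref{eq:linbound} with the stated exponential constant $C_L$. Having uniform bounds, I would extract weakly-$(\ast)$ convergent subsequences, pass to the limit in the linear Galerkin equations to obtain a solution $(v,h)$ of \eqref{eq:lin1w}--\eqref{eq:lin2w}, and transfer \eqref{eq:linbound} by weak lower semicontinuity; the time-continuity memberships $C([0,T];\cdot)$ and $C^1([0,T];\cdot)$ then follow from the obtained regularity via the standard Lions--Magenes interpolation. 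Uniqueness is immediate from linearity: the difference of two solutions solves the homogeneous problem ($f=0$, $v_0=h_0=0$), so \eqref{eq:linbound} forces it to vanish.

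I expect the genuine difficulty to be closing the higher-order estimates in the absence of any time regularity of $\myalpha$: one cannot integrate the coefficient $(1-a\myalpha)$ by parts in time, so the coercive time-derivative terms must be handled without producing $\p_t\myalpha$, and the coupling terms $c\,\nabla\myalpha\cdot v$ and $d\,\myalpha\,\div v$ must be absorbed using only the $L^\infty(H^2)\cap L^2(H^3)$-norms of $\myalpha$. This is exactly where the quantitative smallness hypotheses enter: $\|\myalpha\|_{L^\infty(L^\infty)}<1/a$ secures coercivity of the modified mass term, while $C_{H^1_0,L^4}\|\nabla\myalpha\|_{L^\infty(L^4)}<1/a$ lets the contribution of $\nabla\myalpha\,\p_t h$ arising at the $H^1$-level of the $\p_t h$-estimate be \emph{absorbed} into the coercive part rather than merely treated by Gronwall, keeping $C_L$ finite and of the claimed form.
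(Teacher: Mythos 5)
Your proposal is correct and follows essentially the same route as the paper: Faedo--Galerkin with the Dirichlet-Laplacian eigenbasis for $h$, the explicit integral representation $v(t)=v_0-\int_0^t\nabla h\ds$ so that $\dt v=-\nabla h$ holds pointwise, a hierarchy of energy estimates obtained by testing with $h$, $\dt h$, and powers of the Laplacian combined with elliptic regularity, absorption of the critical $\dt h$-terms via the two smallness hypotheses on $\myalpha$, Gr\"onwall, weak limits, and uniqueness by linearity. The only cosmetic deviation is your initial suggestion of a separate Galerkin subspace for the velocity, which you then effectively discard in favor of the integral formula — exactly the paper's device — so it is immaterial.
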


\begin{proof}
We follow the standard procedure~\cite{boyer2012mathematical,evans2010partial,roubivcek2013nonlinear}, and use the Galerkin approximation, a priori estimates, and compactness arguments to establish the existence of a solution to \eqref{eq:lin1w}--\eqref{eq:lin2w} with the required regularity properties. 
Let $w^k$, $k \ge 1$ denote the eigenfunctions of the Laplace operator defined by $-\Delta w^k = \lambda w^k$ in $\Omega$ with $w^k|_{\partial\Omega}=0$. 
Since $\Omega$ is bounded with boundary in $C^{3,1}$, we can conclude that $w^k \in H^{4}(\Omega)$ with $w^k = \Delta w^k=0$ on $\partial\Omega$, see \cite[Theorem 20.4]{Wloka}. 
We also note that $(w^k)_{k \ge 1}$ forms a complete orthogonal system in $L^2(\Omega)$ and $H_0^1(\Omega) \cap H^2(\Omega)$. 
We denote by $W^k=\text{span}\{w^j : j \le k\}$ the finite-dimensional subspace spanned by the first $k$ eigenfunctions. 
Furthermore, let $\Pi^k$ denote the $L^2$-orthogonal projection onto $W^k$, which is also orthogonal on $H_0^1$ and $H^2 \cap H_0^1$, and hence $\|h_0^k\|_{X} \leq \|h_0\|_{X}$ for any choice of the spaces $X \in \{L^2(\Omega),H_0^1(\Omega),H_0^1(\Omega)\cap H^2(\Omega)\}$; see \cite[Lemma 7.5]{robinson2013infinite}.
\medskip 

\noindent 
\textbf{Step 1: Galerkin approximation.} 
We define $h^k : [0,T] \to W^k$ as the solution of  
\begin{align} \label{eq:galerkin}
(h_t^k,g^k)_\Omega + b \, (\nabla h^k,\nabla g^k)_\Omega 
= - (\div v^k, g^k)_\Omega + a \, (\myalpha h_t^k,g^k)_\Omega - c \, (\nabla \myalpha, v^k g^k)_\Omega- d \, (\div v^k,\myalpha g^k)_\Omega + (f,g^k)_\Omega,
\end{align}
which is assumed to hold for all $0 \le t \le T $ and for all $g^k \in W_k$, with initial value $h^k(0)=\Pi^k h_0$, and velocity component given by the explicit solution formula 
$v^k(t)=v_0-\int_0^t \nabla h^k(s) \ds$. 
The latter implies that the relation
\begin{align} \label{eq:galerkin_v} 
v_t^k=-\nabla h^k 
\end{align} 
holds pointwise in $\Omega \times (0,T)$ and $v^k(0)=v_0$. 
It is not difficult to see that \eqref{eq:galerkin}--\eqref{eq:galerkin_v} amounts to a finite-dimensional system of integro-differential equations. The local in time existence of solutions can thus be proven by standard arguments.
From the regularity of the basis functions $w^k$ and the initial values $h_0$, $v_0$, we further deduce that 
$h^k \in C^1([0,T];H^2(\Omega)) \cap C([0,T];H^4(\Omega))$ and $v^k \in C^1([0,T];H^1_\div(\Omega))$ with homogeneous boundary values $h^k=\Delta h^k = \div v^k=0$ on $\partial\Omega$. 

\medskip 

\noindent 
\textbf{Step~2. Energy estimates.}
We now derive a sequence of energy estimates which essentially follow by testing \eqref{eq:galerkin} appropriately and using \eqref{eq:galerkin_v} where required. 
For ease of notation, we write $\|\cdot\|=\|\cdot\|_{L^2}$ in the sequel and introduce another energy function
\begin{align} \label{eq:estar}
E^*(t) = \frac{1}{2} \Big(\|h^k\|^2 + (1+Kb) \|\nabla h^k\|^2 + (b+L) \|\Delta h^k\|^2 + \|v^k\|^2 + \|\div v^k\|^2 + L \|\nabla \div v^k\|^2\Big), 
\end{align}
with some parameters $L,K>0$ that will be chosen below. This energy will be used to bound several terms in our estimates.

\medskip 
\noindent 
\textbf{Step 2a.} 
We test \eqref{eq:galerkin} with $g^k=h^k$, use integration-by-parts for the third term in \eqref{eq:galerkin}, and estimate using standard tools, that is, Cauchy-Schwarz, H\"older, and Young inequalities. This gives 
\begin{align*}
\frac{1}{2} \ddt \|h^k\|^2 + b \|\nabla h^k\|^2 
&= (v^k, \nabla h^k)_\Omega + a (\myalpha h_t^k, h^k)_\Omega - c (\nabla \myalpha \cdot v^k, h^k)_\Omega - d (\myalpha \div v^k, h^k)_\Omega + (f,h^k)_\Omega \\
&\le -\frac{1}{2} \ddt \|v^k\|^2 + \delta \|h_t^k\|^2 + \|f\|^2 + C_1\big(1 + 1/\delta + \|\myalpha\|_{L^\infty}^2 + \|\nabla \myalpha\|_{L^\infty}^2\big) \,  E^*(t).
\end{align*}
For the first term on the right-hand side, we used \eqref{eq:galerkin_v}. 
The parameter $\delta>0$ comes from Young's inequality and will be chosen below. Finally, note that the constant $C_1$ depends solely on the bounds of the parameters in Assumption~\ref{ass:1}.

\medskip 

\noindent\textbf{Step 2b.}
We continue by testing \eqref{eq:galerkin} with $g^k=K h_t^k$. This yields
\begin{align*}
K \|h_t^k\|^2 + \frac{Kb}{2} \ddt\|\nabla h^k\|^2 
&= K (\div v^k, h_t^k)_\Omega  +  K a (\myalpha h_t^k, h_t^k)_\Omega - K c (\nabla \myalpha \cdot v^k, h_t^k)_\Omega - Kd (\myalpha \div v^k, h_t^k)_\Omega + K (f,h_t^k)_\Omega \\
&\le K (a \|\myalpha\|_{L^\infty} + \delta) \|h_t^k\|^2 + C_2 \|f\|^2 + C_2'\big(1 + 1/\delta + \|\myalpha\|_{L^\infty}^2 +  \|\nabla \myalpha\|_{L^\infty}^2\big) \, E^*(t).
\end{align*}
The constants $C_2$ and $C_2'$ here depend on the bounds for the parameters and the choice of $K$ and $\delta$ below.

\medskip 

\noindent 
\textbf{Step 2c.} 
Next, we test \eqref{eq:galerkin} with $g^k=-\Delta h^k$. This yields
\begin{align*}
\frac{1}{2} \ddt \|\nabla h^k\|^2 + b \|\Delta h^k\|^2 
&= (\div v^k, \Delta h^k)_\Omega 
- a (\myalpha h_t^k, \Delta h^k)_\Omega + c (\nabla \myalpha \cdot v^k, \Delta h^k)_\Omega + d (\myalpha \div v^k, \Delta h^k)_\Omega - (f,\Delta h^k)_\Omega \\
&\le -\frac{1}{2} \ddt \|\div v^k\|^2 + \delta \|h_t^k\|^2 + C_3 \|f\|^2 + 
C_3'\big(1 + 1/\delta + \|\myalpha\|_{L^\infty}^2 +  \|\nabla \myalpha\|_{L^\infty}^2\big) \, E^*(t).
\end{align*}
For the first term on the right-hand side, we again used \eqref{eq:galerkin}.
The constants $C_3$, $C_3'$ here again depend solely on the bounds for the model parameters and the choice of the small parameter $\delta$; see below.

\medskip 

\noindent 
\textbf{Step 2d.} 
We next test \eqref{eq:galerkin} with $g^k=-\Delta h_t^k$ and use integration--by--parts in space for most of the terms. This leads to 
\begin{align*}
\|\nabla h_t^k\|^2 + \frac{b}{2} \ddt \|\Delta h^k\|^2 
&= (\nabla \div v^k, \nabla h_t^k)_\Omega - a (\nabla \myalpha h_t^k + \myalpha \nabla h_t^k, \nabla h_t^k)_\Omega + c (\nabla^2 \myalpha \cdot v^k + \nabla \myalpha : \nabla v^k, \nabla h_t^k)_\Omega \\
& \qquad \qquad + d(\nabla \myalpha \div v^k + \myalpha \nabla \div v^k, \nabla h_t^k)_\Omega - (\nabla f,\nabla h_t^k)_\Omega \\
&\le (a \|\myalpha\|_{L^\infty} + \delta) \|\nabla h_t^k\|^2 + C_4\big(\|\nabla f\|^2 + \|v_0\|_{H^1}^2\big) \\
&\qquad \qquad + C_4'\big(1 + 1/\delta + \|\myalpha\|_{L^\infty}^2 + \|\nabla \myalpha\|_{L^\infty}^2 + \|\nabla^2 \myalpha\|_{L^4}^2\big) \, E^*(t).
\end{align*}
In the second step, we used $ \nabla v^k(t) = \nabla v(0) + \int_0^t \nabla \dt v^k(s) ds$ and \eqref{eq:galerkin_v} to obtain the bound
\begin{align*}
\|\nabla v^k\|^2 
&\le 2 \|\nabla v_0\|^2 + 2 T \|\nabla^2 h^k\|^2 \le C_4''\big( \|\nabla v_0\|^2 + 2 T \|\Delta h^k\|^2_{L^2}\big).
\end{align*}
In addition, we used $\|\Delta h^k\| \simeq \|h^k\|_{H^2}$ on $H_0^1(\Omega)\cap H^2(\Omega)$, since $\Omega$ was assumed to have a $C^{3,1}$-regular boundary. 
\medskip 

\noindent 
\textbf{Step 2e.}
As a final step, we test \eqref{eq:galerkin} with $g^k=L \Delta^2 h^k$ and integrate by parts in space as before. This leads to 
\begin{align*}
\frac{L}{2}\ddt \|\Delta h^k\|^2 + Lb \|\nabla\!\Delta h^k\|^2 
&= L (\nabla \div v^k, \nabla\!\Delta h^k)_\Omega - La (\nabla \myalpha h_t^k + L\myalpha \nabla h_t^k, \nabla\!\Delta h^k)_\Omega + Lc (\nabla^2 \myalpha \cdot v^k + \nabla \myalpha : \nabla v^k, \nabla\!\Delta h^k)_\Omega \\
& \qquad \qquad + Ld(\nabla \myalpha \div v^k + \myalpha \nabla \div v^k, \nabla\!\Delta h^k)_\Omega - L (\nabla f,\nabla\!\Delta h^k)_\Omega \\
&\le -\frac{L}{2} \ddt \|\nabla \div v^k\|^2 +  L \delta \|\nabla\!\Delta h^k\|^2 + \frac{L a^2}{\delta}\big( \|\myalpha\|_{L^\infty}^2 \|\nabla h_t^k\|^2 + \|\nabla \myalpha\|_{L^4}^2 \|h_t^k\|_{L^4}^2\big)  \\
& \qquad \qquad + C_5\big(\|\nabla f\|^2 + \|\nabla v_0\|^2\big) 
+ C_5'\big(1 + 1/\delta  + \|\myalpha\|_{L^\infty}^2 + \|\nabla \myalpha\|_{L^\infty}^2 + \|\nabla^2 \myalpha\|_{L^4}^2\big) \, E^*(t).
\end{align*}
For the second inequality, we used very similar arguments as in the previous step.
The third term on the right-hand side can be further estimated by $\frac{C' L a^2}{\delta} \|\myalpha\|_{L^\infty \cap W^{1,4}}^2 \big(\|h_t^k\|^2 + \|\nabla h_t^k\|^2\big)$ using Sobolev embeddings.  

\medskip 

\noindent 
\textbf{Step~3. Uniform bounds.}
By adding up the estimates derived in the previous steps, we obtain
\begin{align*}
\frac{1}{2} \ddt 
&\Big( 
\|h^k\|^2 + \|v^k\|^2               %
+ Kb \|\nabla h^k\|^2              %
+ \|\nabla h^k\|^2 + \|\div v^k\|^2  %
+ b \|\Delta h^k\|^2               %
+ L\|\Delta h^k\|^2                 
  + L\|\nabla \div v^k\|^2         %
\Big) \\
&\qquad \qquad + 
\Big(
b \|\nabla h^k\|^2                 %
+ K \|h_t^k\|^2                    %
+ b \|\Delta h^k\|^2               %
+ \|\nabla h_t^k\|^2               %
+ Lb \|\nabla\!\Delta h^k\|^2       %
\Big) \\
& \qquad \qquad - 
\Big( 
K (a \|\myalpha\|_{L^\infty} + \delta) \|h_t^k\|^2                 %
+ \delta \|h_t^k\|^2               %
+ \delta \|h_t^k\|^2               %
+ (a \|\myalpha\|_{L^\infty} 
   + \delta) \|\nabla h_t^k\|^2    %
+ \tfrac{C' a^2 L}{\delta} 
  \|\myalpha\|_{L^\infty \cap W^{1,4}}^2 \big(\|h_t^k\|^2 + \|\nabla h_t^k\|^2\big)
\Big) \\
&\le 
C_1^* \|f\|_{H^1}^2 + C_2^* \|v_0\|_{H^1}^2 + C_3^* \big(1 + 1/\delta  + \|\myalpha\|_{L^\infty}^2 + \|\nabla \myalpha\|_{L^\infty}^2 + \|\nabla^2 \myalpha\|_{L^4}\big) \, E^*(t).
\end{align*}
We now choose $\delta>0$ sufficiently small, $K$ sufficiently large, and finally $L$ sufficiently small, such that the terms in the third line can be absorbed into the second line of the left-hand side. 
In order to be able to do so, we used $a \|h\|_{L^\infty} < 1$. 
Together with the definition of $E^*(t)$, see \eqref{eq:estar}, we then see that
\begin{align*}
\ddt E^*(t) + c^*\big(\|h_t^k\|_{H^1}^2 + \|\nabla\!\Delta h^k\|^2\big) &\le 
C_1^* \|f\|_{H^1}^2 + C_2^* \|v_0\|_{H^1}^2 + C_3^*\big(1 + 1/\delta + \|\myalpha\|_{L^\infty}^2 + \|\nabla \myalpha\|_{L^\infty}^2 + \|\nabla^2 \myalpha\|_{L^4}^2\big) \, E^*(t). 
\end{align*}
Note that the constants $c^*,C_i^*$ depend solely on the bounds for the parameters, and $C_i^*$ additionally depends on the final time $T$.
We then integrate this inequality in time and use the definition of $E^*(t)$ as well as $\|\Delta h^k\| \simeq \|h^k\|_{H^2}$ and $\|\nabla\!\Delta h^k\| \simeq \|h^k\|_{H^3}$, where we employed the  boundary conditions. 
We further rewrite the resulting estimate in the form $z(t)\leq z_0+\int_0^t g(s) z(s) \,\textup{d}s$
with 
\begin{align}
z(t)&=\|h^k(t)\|_{H^2}^2 + \|v^k(t)\|_{H^0_\div}^2
+  \|h^k_t\|_{L^2_t(L^2)}^2  + \|h^k\|_{L^2_t(H^2)}^2 , \\
z_0 &=\|h_0\|_{H^2}^2 + \|v_0\|_{H^1_\div}^2 + \hat C_1  \|f\|_{L^2(H^1)}^2, \\
g(s) &= \hat C_2 + \hat C_3 \big(\|\nabla\myalpha(s)\|_{L^\infty}^2 + \|\nabla^2 \myalpha\|_{L^4}^2\big).
\end{align}
Note that $\hat C_i$ are uniform constants and $g \in L^1(0,T)$ according to our assumptions on $\myalpha$. 
Applying the Gr\"onwall inequality, see \cite[Lemma II.4.10]{boyer2012mathematical}, we thus conclude that
$z(t) \leq z_0 \exp\!\big(\int_0^t g(s) \, \textup{d}s\big)$. Together with the definition of $v^k$, this already yields 
\begin{align*}
\|h^k(t)\|_{H^2}^2  &+  \|v^k(t)\|_{H^1_\div}^2 + \|h^k_t\|_{L^2(H^1)}^2  + \|h^k\|_{L^2(H^3)}^2  \\ 
&\leq \hat C_1 \, \exp\!\big(\hat C_2 T + \hat C_3 \|\nabla \myalpha\|_{L^2(L^\infty) \cap H^1(L^4)}^2  \big) \cdot  \big(\|h_0\|_{H^2}^2 + \|v_0\|_{H^1_\div}^2 +\|f\|_{L^2(H^1)}^2 \big),
\end{align*}
which holds with constants $\hat C_i$ that are independent of the dimension of $W^k$.

\medskip 

\noindent 
\textbf{Step 4: Limit process, existence, and uniqueness.}
The uniform bounds on the solution $(v^k,h^k)$ allow us to extract a sub-sequence which converges in an appropriate sense to a limit function $(\bar v,\bar h)$ satisfying the same bounds. 
Due to the high regularity of the solutions, one can pass to the limit in all terms of \eqref{eq:galerkin} and show that $(\bar v,\bar h)$ satisfies the weak form of \eqref{eq:lin1}--\eqref{eq:lin2} as well as the boundary and initial conditions \eqref{eq:sys3}--\eqref{eq:sys4}; see \cite{roubivcek2013nonlinear} for details of the arguments involved. 
Uniqueness finally follows from the linearity of the equations and employing the energy estimates once again. 
\end{proof}

\section{Local solvability}
\label{sec:local}

We can now establish the local existence of a unique solution to problem \eqref{eq:sys1}--\eqref{eq:sys4} by means of Banach's fixed-point theorem.
To do so, we introduce the balls $\mathcal{V}_T^R=\{v \in \mathcal{V}_T : \|v\|_{\mathcal{V}_T} \le R\}$ and $\mathcal{H}_T^R=\{h \in \mathcal{H}_T : \|h\|_{\mathcal{H}_T} \le R\}$ of radius $R$ in $\mathcal{V}_T$ and $\mathcal{H}_T$, respectively. 
The parameter $R$ will be adapted several times below as needed.
We will prove the following result.
\begin{proposition} \label{pro:local}
Let Assumption~\ref{ass:1} hold. Then for any $T>0$, there exist $\eps=\eps(T)>0$ and $R=R(T)>0$, such that for all initial values $(v_0,h_0) \in H^1_\div(\Omega) \times H^2(\Omega)$ with $h_0=\div v=0$ on $\partial\Omega$ and such that $\cal E(v_0,h_0) \le \eps$, the system \eqref{eq:sys1}--\eqref{eq:sys4} has a unique local solution $(v,h) \in \cal V_T^R \times \cal H_T^R$. Moreover, $\|(v,h)\|_{\cal V_T \times \cal H_T} \le C(T) \, \cal E(v_0,h_0)$ with constant $C(T)$ depending only on $T$. 
\end{proposition}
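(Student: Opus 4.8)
The plan is to recast the nonlinear system \eqref{eq:sys1}--\eqref{eq:sys4} as a fixed-point problem for the enthalpy and to invoke Banach's fixed-point theorem, using \Cref{pro:lin} as the solution operator for the linearized equations. Given $\eta \in \cal H_T^R$, the Sobolev embeddings $H^2(\Omega) \hookrightarrow L^\infty(\Omega)$ and $H^2(\Omega) \hookrightarrow W^{1,4}(\Omega)$, valid for $n \le 3$, yield $\|\eta\|_{L^\infty(L^\infty)} + C_{H^1_0,L^4}\|\nabla\eta\|_{L^\infty(L^4)} \le C_S R$, so that for $R$ small enough the smallness hypothesis on $\eta$ required in \Cref{pro:lin} is met. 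Taking $f=0$, the linear problem \eqref{eq:lin1}--\eqref{eq:lin2} with coefficient $\eta$ then has a unique solution $(v,h) \in \cal V_T \times \cal H_T$, and I would define the map $\mathcal{S}\colon \cal H_T^R \to \cal H_T$ by $\mathcal{S}(\eta) = h$, with velocity slaved through $\pt v = -\nabla h$, $v(0)=v_0$. Any fixed point $h = \mathcal{S}(h)$ is, together with its velocity, a solution of \eqref{eq:sys1}--\eqref{eq:sys4}.

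Next I would establish the self-mapping property. Since $f=0$, the linear bound \eqref{eq:linbound} gives directly $\|\mathcal{S}(\eta)\|_{\cal H_T}^2 \le \|(v,h)\|_{\cal V_T\times\cal H_T}^2 \le C_L(R)\,\cal E(v_0,h_0)$, where $C_L(R)$ is now $\eta$-independent thanks to the uniform control $\|\nabla\eta\|_{L^\infty(L^\infty)\cap L^2(W^{1,4})} \lesssim R$ on the ball. Fixing $R$ first and then choosing $\eps = \eps(T,R)$ small enough that $C_L(R)\,\eps \le R^2$ shows that $\mathcal{S}$ maps $\cal H_T^R$ into itself and that the associated velocity lies in $\cal V_T^R$ by the same estimate.

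The crucial step is the contraction. For $\eta_1,\eta_2 \in \cal H_T^R$ with images $(v_i,h_i)$, the differences $\delta v = v_1-v_2$, $\delta h = h_1-h_2$, $\delta\eta=\eta_1-\eta_2$ solve the linearized equations with coefficient $\eta_1$, vanishing initial data, and source
\begin{align*}
F = a\,\delta\eta\,\pt h_2 - c\,\nabla\delta\eta\cdot v_2 - d\,\delta\eta\,\div v_2,
\end{align*}
obtained by adding and subtracting mixed terms. Because $\pt h_2$ is controlled only in $C([0,T];L^2)$, the source $F$ is too rough to feed \eqref{eq:linbound} as a black box, so instead I would derive a low-order energy estimate by hand. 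Testing the momentum difference with $\delta v$ and the enthalpy difference with $\delta h$, the coupling $(\nabla\delta h,\delta v)_\Omega+(\div\delta v,\delta h)_\Omega$ cancels after integration by parts (the boundary term vanishes since $\delta h=0$ on $\partial\Omega$), while rewriting
\begin{align*}
a(\eta_1\,\pt\delta h,\delta h)_\Omega = \tfrac{a}{2}\ddt(\eta_1,(\delta h)^2)_\Omega - \tfrac{a}{2}(\pt\eta_1,(\delta h)^2)_\Omega
\end{align*}
produces the coercive energy $\tfrac12((1-a\eta_1),(\delta h)^2)_\Omega$, positive because $a\|\eta_1\|_{L^\infty}<1$. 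The term $d(\eta_1\div\delta v,\delta h)_\Omega$ is integrated by parts to avoid $\div\delta v$, and the remaining coefficient contributions are absorbed into $b\|\nabla\delta h\|_{L^2}^2$ and the energy via Young's inequality together with the smallness of $R$ (the $\pt\eta_1$ term uses $\|\pt\eta_1\|_{L^2}\lesssim R$). The source is handled by Hölder--Sobolev bounds such as $a(\delta\eta\,\pt h_2,\delta h)_\Omega \le a\|\delta\eta\|_{L^6}\|\pt h_2\|_{L^2}\|\delta h\|_{L^3}$ and the analogues for the other two terms, using $H^1\hookrightarrow L^6$ and the uniform bounds $\|v_2\|_{H^1},\|\div v_2\|_{H^1},\|\pt h_2\|_{L^2}\lesssim R$; after Young's inequality and integration in time these contribute at most $C(T)\,R^2\,\|\delta\eta\|_{L^2(H^1)}^2$. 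Grönwall's inequality then yields
\begin{align*}
\|\delta v\|_{C(L^2)}^2 + \|\delta h\|_{C(L^2)\cap L^2(H^1)}^2 \le C(T)\,R^2\,\|\delta\eta\|_{L^2(H^1)}^2,
\end{align*}
so that $\mathcal{S}$ is a contraction in the weaker metric induced by $\|\cdot\|_{C(L^2)\cap L^2(H^1)}$ once $R$ is small, with $T$ fixed.

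Finally I would invoke the standard fact that the ball $\cal H_T^R$, closed in the strong norm, is complete for this weaker metric by weak-$\ast$ lower semicontinuity of the $\cal H_T$-norm, so Banach's fixed-point theorem applies and produces a unique fixed point $h\in\cal H_T^R$ with associated $v\in\cal V_T^R$, i.e., a unique local solution in $\cal V_T^R\times\cal H_T^R$; the quantitative a priori bound then follows from \eqref{eq:linbound} evaluated at the fixed point (in its squared form, with constant $C(T)=C_L(R(T))$). I expect the contraction to be the main obstacle: the loss of a time derivative in $F$ forces the estimate into the weaker norm, and the requirement that $(1-a\eta_1)$ stay coercive \emph{and} that the contraction constant fall below one \emph{without} shrinking the given $T$ is exactly what pins down the smallness thresholds $R=R(T)$ and $\eps=\eps(T)$.
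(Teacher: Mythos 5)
Your proposal is correct in its overall architecture and coincides with the paper's proof in the framework: the same solution operator built from Proposition~\ref{pro:lin} with $f=0$, the same smallness check on $\eta\in\cal H_T^R$ via Sobolev embeddings, and the same self-mapping argument choosing $\eps=\eps(T,R)$ after fixing $R$. Where you genuinely diverge is the contraction step. The paper writes the difference system as the linearized problem with coefficient $\tilde h_1$ and source $f=a\tilde h\, h_{2,t}-c\nabla\tilde h\cdot v_2-d\tilde h\,\div v_2$, bounds $\|f\|_{L^2(H^1)}\le C(T)\,R\,\|\tilde h\|_{\cal H_T}$, and then simply re-applies Proposition~\ref{pro:lin}, obtaining a contraction in the \emph{full} norm of $\cal V_T\times\cal H_T$. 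The obstacle you identified --- that $h_{2,t}$ is ``controlled only in $C([0,T];L^2)$'' --- is real only if \eqref{eq:linbound} is used strictly as a black box: the proof of Proposition~\ref{pro:lin} (Step~3) in fact also bounds $\|h_t\|_{L^2(H^1)}$, and since $h_2$ lies in the \emph{range} of the linear solution operator, this extra bound is available and makes the strong-norm estimate of $\|\tilde h\,h_{2,t}\|_{L^2(H^1)}$ close. Your remedy --- a hand-made low-order energy estimate for the differences, contraction in the weaker topology $C(L^2)\cap L^2(H^1)$, and Banach's theorem on the strong ball equipped with the weak metric --- is the classical two-norm technique; it buys independence from that unstated extra regularity of the linear solution map, at the price of an additional completeness argument, while the paper's route is shorter because it recycles the linear well-posedness result wholesale and delivers uniqueness directly in the strong ball.

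One caveat on your final step: the closed ball $\cal H_T^R$ is \emph{not} literally complete under the weak metric by ``weak-$\ast$ lower semicontinuity,'' because the norms $C([0,T];H^2)$ and $C^1([0,T];L^2)$ are not weak-$\ast$ closed; a weak limit of a bounded sequence a priori lands only in $L^\infty(H^2)\cap W^{1,\infty}(L^2)$ with weak continuity in time. The standard repair, which you should spell out, is to run the fixed-point argument on the weak-$\ast$ closed set defined by these $L^\infty$-type bounds (on which the map is still well defined, contractive, and self-mapping), and then recover the full $\cal H_T$-regularity of the fixed point a posteriori from Proposition~\ref{pro:lin}, since the fixed point coincides with the image of itself under the linear solution operator. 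With that adjustment your argument is complete.
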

\begin{proof}
We consider the mapping 
\begin{align} \label{eq:fpop}
\mathcal{F} : \mathcal{V}_T^R \times  \mathcal{H}_T^R &\to \mathcal{V}_T \times \mathcal{H}_T, \qquad (\tilde v,\tilde h) \mapsto (v,h),
\end{align}
where $(v,h)$ is the solution to \eqref{eq:lin1}--\eqref{eq:lin2} and \eqref{eq:sys3}--\eqref{eq:sys4}, with parameter functions $\myalpha = \tilde h$, and $f=0$. 
We note that $\cal F(\tilde h,\tilde v)$ does not depend on the velocity $\tilde v$. 
Any fixed-point $(\bar v,\bar h) \in \mathcal{V}_T^R \times \mathcal{H}_T^R$ of $\mathcal{F}$ can be considered a solution \eqref{eq:weak1}--\eqref{eq:weak2} and to additionally satisfy the initial and boundary conditions \eqref{eq:sys3}--\eqref{eq:sys4}. 
By the regularity of the solutions to the linearized problem \eqref{eq:lin1}--\eqref{eq:lin2}, one can see that any fixed-point $(\bar v,\bar h)$ in fact also solves the original equations \eqref{eq:sys1}--\eqref{eq:sys2} in strong form. 
It thus remains to verify the assumptions of Banach's fixed-point theorem, which is what we do in the following. 

\medskip 

\noindent 
\textbf{Step~1.}
For any $T>0$ and $R>0$, the set $\mathcal{V}_T^R \times \mathcal{H}_T^R$ is non-empty and closed in the topology of $\mathcal{V}_T \times \mathcal{H}_T$.
Moreover, for $R$ sufficiently small, any function $\myalpha = \tilde h \in \mathcal{H}_T^R$ satisfies the assumptions of Proposition~\ref{pro:lin}, and hence $\mathcal{F}$ is well-defined.

\medskip 
\noindent 
\textbf{Step~2. Self-mapping.}
Let $(\tilde v,\tilde h) \in \mathcal{V}_T^R \times \mathcal{H}_T^R$ with $R$ sufficiently small. In addition, let $(v,h)=\mathcal{F}(\tilde v,\tilde h)$ be the unique solution of the linearized problem \eqref{eq:lin1}--\eqref{eq:lin2} with \eqref{eq:sys3}--\eqref{eq:sys4}, $\myalpha = \tilde h$, and $f=0$. 
From the estimates of Proposition~\ref{pro:lin}, we see that 
\begin{align} %
&\|(v,h)\|_{\cal V_T \times \cal H_T}^2 \leq C_L \cdot \cal E(v_0,h_0).
\end{align}
Using $\|\tilde h\|_{\cal H_T} \le R$ and $\myalpha=\tilde h$, we can bound 
$\|\myalpha\|_{L^2(L^\infty) \cap H^1(L^4)} \le C R$ by using appropriate Sobolev embeddings~\cite{evans2010partial,roubivcek2013nonlinear}, and hence 
$C_L = \hat C_1 \exp(\hat C_2 T + \hat C_3 C R)$. 
We may thus choose $\eps=\eps(R,T)$ sufficiently small, such that $\|(v,h)\|_{\cal V_T \times \cal H_T} \le R$ for all initial values satisfiying $\cal E(v_0,h_0) \le \eps$.
Under this restriction, $\cal F$ is a self-mapping on $\cal V_T^R \times \cal H_T^R$.

\medskip 
\noindent 
\textbf{Step~3. Contraction.}
Let the two tuples $(\tilde v_1, \tilde h_1), (\tilde v_2,  \tilde h_2) \in \cal V_T^R \times \cal H_T^R$ be given, $R>0$ be sufficiently small, and set 
\begin{align*} 
(v_1,h_1):=\cal F(\tilde v_1, \tilde h_1), \qquad  (v_2,h_2):=\cal F(\tilde v_2, \tilde h_2).
\end{align*}
We abbreviate the differences by $(\tilde v,\tilde h):=(\tilde v_1-\tilde v_2,\tilde h_1-\tilde h_2)$ and $(v,h):=(v_1-v_2,h_1-h_2)$, and note that $(v(0),h(0))=(0,0)$, since the functions $(v_i,h_i)$ satisfy the same initial conditions \eqref{eq:sys4}. 
From the definition of $\cal F$ via \eqref{eq:lin1}--\eqref{eq:lin2}, one can further see that 
\begin{align} \label{Eq:LinearizedDiff}            (v_t,u)_\Omega +(\nabla  h, u)_\Omega &=0, \\
((1-a\tilde h_1)h_t,g)_\Omega + b (\nabla  h,\nabla g)_\Omega-(v,\nabla g)_\Omega  &= a(\tilde h h_{2,t},g)_\Omega- c(\nabla \tilde h_1 \cdot v  ,g)_\Omega -c(\nabla  \tilde h \cdot v_2   ,g)_\Omega \\ &\quad - d(  \tilde h_1\, \div v, g)_\Omega - d(\tilde h\,  \div v_2, g)_\Omega, \notag
\end{align}
for any $u \in L^2(\Omega)^n$, $g \in H^1(\Omega)$, and all $0 \le t \le T$.
This amounts to the weak form of the linearized system \eqref{eq:lin1}--\eqref{eq:lin2} with 
\begin{alignat}{2}
 \myalpha=\tilde h_1 
 \qquad \text{and} \qquad  
  f =a \tilde h h_{2,t} - d\tilde h \div v_2 - c \nabla \tilde h \cdot v_2.
\end{alignat}
In order to apply Proposition~\ref{pro:lin}, we estimate $f$ in the norm of $L^2(H^1)$. 
We treat the three terms independently. First, note that 
\begin{align*}
\|\tilde h h_{2,t}\|_{L^2(H^1)} 
\le \|\tilde h\|_{L^\infty(L^\infty)} \|h_{2,t}\|_{L^2(H^1)} + \|\nabla \tilde h\|_{L^\infty(L^4)} \|h_{2,t}\|_{L^2(L^4)} \le C(T) R \|\tilde h\|_{\cal H_T},
\end{align*}
where we applied Hölder and Sobolev inequalities, and the uniform bounds for $\tilde h_i \in \cal H_T^R$. 
For the second term, we obtain
\begin{align*}
\|\tilde h \div v_2\|_{L^2(H^1)} 
\le \|\tilde h\|_{L^\infty(L^\infty)} \|\div v_2\|_{L^2(H^1)} + \|\nabla \tilde h\|_{L^\infty(L^4)} \|\div v_2\|_{L^2(L^4)} \le C(T) R \|\tilde h\|_{\cal H_T},
\end{align*}
using similar arguments as before. The third term can finally be bounded by
\begin{align*}
\|\nabla \tilde h \cdot v_2\|_{L^2(H^1)} 
\le \|\nabla \tilde h\|_{L^2(L^\infty)} \|v_2\|_{L^\infty(H^1)} + \|\nabla^2 \tilde h\|_{L^\infty(L^2)} \|v_2\|_{L^2(L^\infty)} \le C(T) R \|\tilde h\|_{\cal H_T}. 
\end{align*}
By applying Proposition~\ref{pro:lin}, we thus immediately obtain
\begin{align}
\|\cal F(\tilde v_1,\tilde h_1) - \cal F(\tilde v_2,\tilde h_2)\|_{\cal V_T \times \cal H_T} 
= \|(v,h)\|_{\cal V_T \times \cal H_T}
\le C_L C(T) R \|(\tilde v_1-\tilde v_2, \tilde h_1-\tilde h_2)\|_{\cal V_T \times \cal H_T}.
\end{align}
By choosing $R$ sufficiently small, we get a contraction, and we define $\eps(T) = \eps(T,R)$ to retain the self-mapping property. 
In summary, we have shown that $\cal F$ is a contractive self-mapping on $\cal V_T^R \times \cal H_T^R$ for this choice of $R$, whenever $\cal E(v_0,h_0) \le \eps(T)$. 
This yields the existence of a unique fixed-point $(v,h)$ of $\cal F$ in $\cal V_T^R \times \cal H_T^R$, and hence a local solution to \eqref{eq:sys1}--\eqref{eq:sys4}. \end{proof}

\begin{remark}
With the same arguments, one can show that the unique local solution $(v,h)$ of problem \eqref{eq:sys1}--\eqref{eq:sys4} also depends continuously on the (initial) data. Hence, the problem is (locally) well-posed. The details are left to the reader.    
\end{remark}

\section{Global solutions and decay estimate} 
\label{sec:global}

As a next step of our analysis, we now derive a-priori estimates for the solution of \eqref{eq:sys1}--\eqref{eq:sys3} that are independent of the time horizon $T$. As a consequence, the local solution can be extended to infinity. Moreover, the estimates will allow us to establish exponential decay to equilibrium. 
Let us start with stating and proving the time-independent a-priori bounds. 
\begin{proposition}\label{pro:global} 
Let Assumption~\ref{ass:1} hold. Then there exist constants $\eps_\infty$, $C_\infty>0$, such that for all $T>0$ and initial values satisfying $\cal E(v_0,h_0) \le \eps_\infty$, the local solutions $(v,h) \in \cal V_T \times \cal H_T$ provided by Proposition~\ref{pro:local} satisfy
\begin{align} \label{eq:global}
&\|h(t)\|_{H^2}^2 + \|\div v(t)\|_{H^1}^2+ \int_0^t \|h\|_{H^3}^2 + \|h_t\|_{H^1}^2 + \|\div v\|_{H^1}^2 \, ds  \leq C_\infty \big(\|\div v_0\|_{H^1}^2+\|h_0\|_{H^2}^2 \big) \quad \forall t \in (0,T]. 
\end{align}
Moreover, the energy at a time $t$ can be estimated by 
$\cal E(v(t),h(t))
\leq C_0\cal E(v_0,h_0)$ for all $t \in (0,T]$.
\end{proposition}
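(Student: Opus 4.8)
The plan is to re-run the energy hierarchy of the proof of Proposition~\ref{pro:lin} for the genuinely nonlinear system, now choosing the coefficient function to be the solution itself, $\myalpha=h$, and the source $f=0$. Working on the Galerkin level as before, I would use the same energy $E^*(t)$ from \eqref{eq:estar}, which controls $\|h\|_{H^2}^2+\|v\|_{L^2}^2+\|\div v\|_{H^1}^2$, and whose associated dissipation is $\mathcal{D}(t)\simeq\|h\|_{H^3}^2+\|h_t\|_{H^1}^2$ (using the elliptic equivalences $\|\Delta h\|\simeq\|h\|_{H^2}$ and $\|\nabla\!\Delta h\|\simeq\|h\|_{H^3}$ granted by the $C^{3,1}$ boundary). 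Testing \eqref{eq:sys2} successively with $h,\,Kh_t,\,-\Delta h,\,-\Delta h_t,\,L\Delta^2 h$ and summing, exactly as in Steps~2a--2e, produces a differential inequality for $E^*$; the decisive point is to arrange it so that, in contrast to the linear estimate, \emph{no} factor growing in $T$ (i.e.\ no Gr\"onwall exponential) appears.

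The key structural difference is that, since $\myalpha=h$ is now the unknown, all the coefficient norms that were treated as given data and absorbed into Gr\"onwall in Proposition~\ref{pro:lin} --- namely $\|h\|_{L^\infty}$, $\|\nabla h\|_{L^4}$, $\|\nabla^2 h\|_{L^4}$ --- are themselves controlled by $E^*(t)^{1/2}$ through the embeddings $H^2\hookrightarrow L^\infty\cap W^{1,4}$ and $H^3\hookrightarrow W^{1,\infty}$, valid precisely because $n\le3$. Every nonlinear term can therefore be bounded by $C\,E^*(t)^{1/2}\,\mathcal{D}(t)$ rather than by a large constant times $E^*(t)$. The top-order velocity-gradient contributions (such as $(\nabla v:\nabla h,\nabla\!\Delta h)$) are the delicate ones, and I would control $\nabla v$ uniformly in $T$ via $v_t=-\nabla h$: taking the curl shows $\nabla\times v(t)=\nabla\times v_0$ is frozen, so a div--curl estimate compatible with the boundary conditions yields $\|\nabla v(t)\|\lesssim\|v(t)\|+\|\div v(t)\|+\|\nabla\times v_0\|$, all small, thereby avoiding the $T$-proportional bound $\|\nabla v^k\|^2\lesssim\|\nabla v_0\|^2+T\|\Delta h^k\|^2$ used in the linear Steps~2d--2e. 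Collecting everything gives
\begin{align*}
\ddt E^*(t)+c^*\mathcal{D}(t)\le C\,E^*(t)^{1/2}\,\mathcal{D}(t).
\end{align*}

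I would then close the estimate by a continuity (bootstrap) argument. Since $E^*$ is continuous in time and $E^*(0)\le C\,\mathcal{E}(v_0,h_0)\le C\eps_\infty$, set $T_0=\sup\{s\le T: E^*(t)\le\delta\ \forall\,t\le s\}$ for a threshold $\delta$ with $C\delta^{1/2}\le c^*/2$. On $[0,T_0]$ the nonlinear term is absorbed, so $\ddt E^*+\tfrac{c^*}{2}\mathcal{D}\le0$, and integrating gives $E^*(t)+\tfrac{c^*}{2}\int_0^t\mathcal{D}\le E^*(0)$, a bound uniform in both $t$ and $T$. Choosing $\eps_\infty$ so small that $C\eps_\infty<\delta$ forces $T_0=T$ by the usual open--closed argument, so the bound persists on all of $[0,T]$ for arbitrary $T$ (and, fed back into Proposition~\ref{pro:local}, permits continuation of the local solution to every $T$). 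The one term in \eqref{eq:global} not directly present in $E^*$ or $\mathcal{D}$, namely $\int_0^t\|\div v\|_{H^1}^2$, I recover algebraically: solving \eqref{eq:sys2} for $\div v$ gives $\|\div v\|_{H^1}\lesssim\|h_t\|_{H^1}+\|h\|_{H^3}$ (the nonlinear corrections being of lower order under the smallness), whence $\int_0^t\|\div v\|_{H^1}^2\lesssim\int_0^t\mathcal{D}$. Translating back through \eqref{eq:estar} then yields \eqref{eq:global}, and $\mathcal{E}(v(t),h(t))\le C_0\,\mathcal{E}(v_0,h_0)$ follows from the div--curl control of $\nabla v$.

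The main obstacle is the absorption step: one must verify that \emph{every} nonlinear and top-order contribution --- in particular those carrying $\nabla v$ together with third-order derivatives of $h$ --- is dominated by an arbitrarily small constant times $\mathcal{D}(t)$, uniformly in $T$. This is exactly what replaces the linear proof's $T$-dependent Gr\"onwall constant, and it hinges on three ingredients absent from the linear analysis: the Sobolev embeddings available only for $n\le3$, the bootstrapped smallness of the solution, and the structural control of $\nabla v$ through $v_t=-\nabla h$ and the conserved curl $\nabla\times v_0$.
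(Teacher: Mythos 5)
Your two supporting ideas are sound --- controlling $\nabla v$ through the conserved curl (best implemented by noting $v(t)-v_0=-\nabla\Phi(t)$ with $\Phi(t)=\int_0^t h\,\dd s\in H_0^1\cap H^3$, so that $\|\nabla (v(t)-v_0)\|\lesssim\|\Delta\Phi(t)\|=\|\div v(t)-\div v_0\|$ by elliptic regularity, since $v(t)$ itself carries no boundary condition and a bare div--curl bound would be unjustified), and the continuity bootstrap, which parallels Step~5 of the paper's proof. The fatal gap is your central claim that rerunning Steps~2a--2e with $\myalpha=h$ yields $\ddt E^*+c^*\mathcal{D}\le CE^{*1/2}\mathcal{D}$. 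The Gr\"onwall exponential in Proposition~\ref{pro:lin} is \emph{not} produced only by the coefficient norms of $\myalpha$: it is produced equally by the purely \emph{linear} cross terms $K(\div v,h_t)_\Omega$ (test $Kh_t$, Step~2b) and $(\nabla\div v,\nabla h_t)_\Omega$ (test $-\Delta h_t$, Step~2d). These admit only bounds of the form $\delta\|h_t\|^2+C_\delta\|\div v\|^2$, respectively $\delta\|\nabla h_t\|^2+C_\delta\|\nabla \div v\|^2$, i.e.\ energy rather than dissipation, and they do not shrink when the solution is small (they are present even for $\myalpha=0$, which is why the linear estimate is exponential in $T$). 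Expressing $\div v$ from the equation does not help: it gives $\|\div v\|\le(1+o(1))\|h_t\|+b\|\Delta h\|+\dots$, so the cross term costs at least $(1+o(1))K\|h_t\|^2$, more than the $K\|h_t\|^2$ of dissipation that the same test generates. This is exactly why the paper's global proof abandons the linear hierarchy: it tests only with $-\nabla\div v$, $-\Delta h$, $\nabla\div v$, $-\nabla\!\Delta h$, $\nabla h_t$, and it defuses the one unavoidable cross term by integrating $-(\nabla\div v,\nabla h_t)_{\Omega_t}$ by parts in space \emph{and time}, using $\div v_t=-\Delta h$ to convert it into $\|\Delta h\|^2_{L^2_t(L^2)}$ plus endpoint terms, which are then absorbed by the dissipation of its Step~1 and by the energies at times $0$ and $t$. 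Without this device (or an equivalent modification of $E^*$ by cross terms such as $(\nabla\div v,\nabla h)_\Omega$), your differential inequality carries an extra $+CE^*$ on the right and the $T$-uniformity is lost.

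There is a second, independent gap. Even granting your inequality, integration gives $E^*(t)+\tfrac{c^*}{2}\int_0^t\mathcal{D}\le E^*(0)$, and $E^*(0)$ contains $\tfrac12\|v_0\|_{L^2}^2$; ``translating back'' therefore produces \eqref{eq:global} with right-hand side $C\big(\|h_0\|_{H^2}^2+\|\div v_0\|_{H^1}^2+\|v_0\|_{L^2}^2\big)$, strictly weaker than the stated one (take $h_0=0$, $\div v_0=0$, $v_0\neq0$ solenoidal: the statement forces $h\equiv0$ and $\div v\equiv0$, your bound does not). This is not cosmetic: the proof of Theorem~\ref{thm:main} re-applies \eqref{eq:global} from arbitrary starting times $s$ and needs a right-hand side involving only $\|h(s)\|_{H^2}^2+\|\div v(s)\|_{H^1}^2$, because the solenoidal part of $v$ is stationary and never decays; with $\|v(s)\|_{L^2}^2$ present, the Haraux--Lagnese step collapses. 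The remedy is the same as above: build the energy exclusively from $\|\nabla h\|$, $\|\Delta h\|$, $\|\div v\|$, $\|\nabla\div v\|$, so that neither $\|v\|_{L^2}$ nor $\|h\|_{L^2}$ ever enters, which is precisely what the paper's choice of test functions accomplishes.
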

\begin{proof}
Fix $T>0$ and let $(v,h) \in \cal V_T \times \cal H_T$ denote a local solution of \eqref{eq:sys1}--\eqref{eq:sys3} on the time interval $[0,T]$. 
From the a-priori estimates of Proposition~\ref{pro:local}, we know that $\|(v,h)\|_{\cal V_T \times \cal H_T}^2:=\|v\|_{\cal V_T}^2 + \|h\|_{\cal H_T}^2 \leq R^2$, with $R>0$ depending monotonically on the norm of the initial conditions. 
In particular, we may assume $R=R(\eps,T)$ sufficiently small by conditions on the initial values.
From the regularity of the solution, we deduce validity of 
\begin{align}
v_t &= -\nabla h &&\text{ in } L^2(0,T;H^2(\Omega)^n), \label{Eq:Global1} \\
(1-ah)h_t - b\Delta h &= - \div v - c v \cdot \nabla h - d\,h\,\div v &&\text{ in } L^2(0,T;H^1(\Omega)). \label{Eq:Global2}
\intertext{By formal differentiation of these equations, one can further deduce validity of the additional equations}
\Delta v_t &= -\Delta\!\nabla h &&\text{ in } L^2(0,T;L^2(\Omega)^n), \label{Eq:TestGlobal3}
\\
\nabla \div v_t &= -\nabla\!\Delta h &&\text{ in } L^2(0,T;L^2(\Omega)^n), \label{Eq:TestGlobal2}
\\
(1-ah)\nabla h_t - a \nabla h h_t - b\nabla\!\Delta h &= - \nabla \div v - c\nabla( v \cdot \nabla h) -d \nabla( h\div v) &&\text{ in } L^2(0,T;L^2(\Omega)^n). \label{Eq:TestGlobal1}
\end{align} 
By formal testing of these equations, similar to the proof of Proposition~\ref{pro:local}, we will now 
establish $T$-independent a-priori bounds.

\medskip 

\noindent
\textbf{Step 1.} 
We multiply \eqref{Eq:Global1} with $-\nabla \div v$ and \eqref{Eq:Global2} with $-\Delta h$, integrate over $\Omega_t = \Omega \times (0,t)$, and sum up to obtain
\begin{align*}
\frac{1}{2} \Big(\|\nabla h(t)\|_{L^2}^2 &+  \|\div v(t)\|_{L^2}^2\Big) + b \|\Delta h\|_{L^2_t(L^2)}^2 \\
&= \frac{1}{2} \Big( \|\nabla h_0\|_{L^2}^2 + \|\div v_0\|_{L^2}^2\Big)  
- a (h h_t,\Delta h)_{\Omega_t}   + c(\nabla h, v \Delta h)_{\Omega_t} + d(\div v,h \Delta h)_{\Omega_t} \\
&\leq \frac{1}{2} \Big(\|\nabla h_0\|_{L^2}^2 +\|\div v_0\|_{L^2}^2\Big) +a\|h\|_{L^2_t(L^\infty)} \|h_t\|_{L^\infty(L^2)} \|\Delta h\|_{L^2_t(L^2)} \\
&\qquad \qquad + c \|\nabla h\|_{L^2_t(L^4)} \|v\|_{L^\infty(L^4)} \|\Delta h\|_{L^2_t(L^2)} 
+d \|\div v\|_{L^\infty(L^2)} \|h\|_{L^2_t(L^\infty)} \|\Delta h\|_{L^2_t(L^2)}. 
\end{align*}
Here and below, we use $(u,u')_{\Omega_t} = \int_0^t (u,u')_\Omega \, ds$ and write $\|u\|_{L^p_t(X)} = \|u\|_{L^p(0,t;X)}$ and $\|u\|_{L^p(X)}=\|u\|_{L^p(0,T;X)}$ for abbreviation. 
By Sobolev embeddings~\cite{evans2010partial,roubivcek2013nonlinear}, the uniform bounds for the solution $(v,h)$ on $[0,T]$, and $\|h\|_{H^2} \le C' \|\Delta h\|_{L^2}$, we get
\begin{align*}
\frac{1}{2} \Big( \|\nabla h(t)\|_{L^2}^2   + \|\div v(t)\|_{L^2}^2 \Big)
+ b \|\Delta h\|_{L^2_t(L^2)}^2
\leq \frac{1}{2} \Big(\|\nabla h_0\|_{L^2}^2 +\|\div v_0\|_{L^2}^2\Big) +  C R \|\Delta h\|_{L^2_t(L^2)}^2,
\end{align*}
with a constant $C$ depending only on the domain $\Omega$ and the bounds for the model parameters. 
The last term in this estimate can be absorbed into the right-hand side, provided that $R=R(T,\eps)$ is sufficiently small, which can be done by a smallness condition on the initial values. 
In particular, we may assume $C R \le b/2$ and multiply the inequalities by $2/b$ to obtain the following 
\begin{align} \label{Eq:GlobalEst:1} 
\frac{1}{b} \Big(\|\nabla h(t)\|_{L^2}^2 + \|\div v(t)\|_{L^2}^2\Big) + 2 \|\Delta h\|_{L^2_t(L^2)}^2  \leq  \frac{1}{b} \Big(\|\nabla h_0\|_{L^2}^2 + \|\div v_0\|_{L^2}^2\Big). 
\end{align}

\noindent
\textbf{Step 2.} 
We test \eqref{Eq:TestGlobal2} with $\nabla \div v$ and \eqref{Eq:TestGlobal1} with $-\nabla\!\Delta h \in L^2(0,T;L^2(\Omega))$. 
In a similar manner to before, we obtain 
\begin{align*}
\frac{1}{2} \Big( \|\Delta h(t)\|_{L^2}^2 &+ \|\nabla \div v(t)\|_{L^2}^2\Big) 
+ b\|\nabla\!\Delta h\|_{L^2_t(L^2)}^2 
\\ 
&=\frac{1}{2} \Big(\|\Delta h_0\|_{L^2}^2+\|\nabla \div v_0\|_{L^2}^2 \Big)  
- a( h \nabla h_t,\nabla\!\Delta h)_{\Omega_t} - a (\nabla h h_t,\nabla\!\Delta h)_{\Omega_t}  \\ &\qquad \qquad   + c (\nabla v \cdot \nabla h,\nabla\!\Delta h)_{\Omega_t}+ c(v \nabla^2 h,\nabla\!\Delta h)_{\Omega_t}
+ d(\nabla h \div v,\nabla\!\Delta h)_{\Omega_t} + d(h\nabla \div v,\nabla\!\Delta h)_{\Omega_t} \\
&\leq \frac{1}{2} \Big( \|\Delta h_0\|_{L^2}^2+ \|\nabla \div v_0\|_{L^2}^2 \Big) + a \|h\|_{L^\infty_t(L^\infty)} \|\nabla h_t\|_{L^2_t(L^2)} \|\nabla\!\Delta h\|_{L^2_t(L^2)}  + a \|\nabla h\|_{L^2_t(L^\infty)} \|h_t\|_{L^\infty_t(L^2)} \|\nabla\!\Delta h\|_{L^2_t(L^2)}  \\  
&\qquad \qquad   +c \|\nabla v\|_{L^\infty_t(L^2)} \|\nabla h\|_{L^2_t(L^\infty)} \|\nabla\!\Delta h\|_{L^2_t(L^2)}+ c \|v\|_{L^\infty_t(L^4)} \|\nabla^2 h\|_{L^2_t(L^4)} \|\nabla\!\Delta h\|_{L^2_t(L^2)}  \\  
&\qquad \qquad +d \|\nabla h\|_{L^2_t(L^\infty)} \|\div v\|_{L^\infty_t(L^2)} \|\nabla\!\Delta h\|_{L^2_t(L^2)} + d\|h\|_{L^2_t(L^\infty)} \|\nabla \div v\|_{L^\infty_t(L^2)} \|\nabla\!\Delta h\|_{L^2_t(L^2)} \\
&\leq \frac{1}{2} \Big( \|\Delta h_0\|_{L^2}^2+ \|\nabla \div v_0\|_{L^2}^2 \Big) + CR \|\nabla\!\Delta h\|_{L^2_t(L^2)}^2 + C'R \|\nabla h_t\|_{L^2_t(L^2)}^2,
\end{align*}
with constants $C$, $C'$ again depending only on the domain $\Omega$ and the bounds for the model parameters. 
Requireing $R$ sufficiently small, the third term on the right hand side can be absorbed into the left hand side leading to 
\begin{align} 
\label{Eq:GlobalEst:2}
\|\Delta h(t)\|_{L^2}^2 + \|\nabla \div v(t)\|_{L^2}^2  + b \|\nabla\!\Delta h\|_{L^2_t(L^2)}^2 \leq \|\Delta h_0\|_{L^2}^2 + \|\div v_0\|_{H^1}^2  + 2C'R \|\nabla h_t\|_{L^2_t(L^2)}^2.
\end{align}
The last term can be compensated by the next estimate.

\medskip
\noindent\textbf{Step 3.}  
We now multiply \eqref{Eq:TestGlobal1} with $\nabla h_t$ and integrate over $\Omega_t$.
This yields
\begin{align*} %
((1-ah)\nabla h_t,\nabla h_t)_{\Omega_t} &- a( \nabla h h_t,\nabla h_t)_{\Omega_t} - b(\nabla\!\Delta h,\nabla h_t)_{\Omega_t}  \\ &= -(\nabla \div v,\nabla h_t)_{\Omega_t} - c(\nabla( v \cdot \nabla h),\nabla h_t)_{\Omega_t} -d (\nabla( h\div v),\nabla h_t)_{\Omega_t}.
\end{align*}
We can estimate all nonlinear terms using Hölder inequalities and use the a-priori bounds for $(v,h)$. This leads to 
\begin{align*}
\big(1&-a\|h\|_{L^\infty(H^2)}\big) \|\nabla h_t\|_{L^2_t(L^2)}^2 + \frac{b}{2} \|\Delta h(t)\|_{L^2}^2 \\
&\leq  \frac{b}{2} \|\Delta h_0\|^2_{L^2} -(\nabla \div v,\nabla h_t)_{\Omega_t} + \|\nabla h_t\|_{L^2_t(L^2)} \cdot \Big(  a \|\nabla h\|_{L^\infty_t(L^4)} \|h_t\|_{L^2_t(L^4)}  +  c\| \nabla  v\|_{L^\infty_t(L^2)} \|\nabla h\|_{L^2_t(L^\infty)}    \\
&\qquad \qquad \qquad \qquad   
+ c \|v\|_{L^\infty_t(L^4)} \|\nabla^2 h\|_{L^2_t(L^4)} + d\|\nabla h\|_{L^2_t(L^4)} \|\div v\|_{L^\infty_t(L^4)}+d \|h\|_{L^2_t(L^\infty)} \|\nabla \div v\|_{L^\infty_t(L^2)} \Big)   \\
&\le \frac{b}{2} \|\Delta h_0\|^2_{L^2} -(\nabla \div v,\nabla h_t)_{\Omega_t} 
+ \Big(\frac{1}{8} +CR\Big) \|\nabla h_t\|^2_{L^2_t(L^2)} + C' R^2 \|\nabla\!\Delta h\|^2_{L^2_t(L^2)}.
\end{align*}
The constants $C$, $C'$ again only depend on $\Omega$ and the model parameters.
For the second term on the right-hand side, we integrate by parts in space and time and use \eqref{Eq:Global1} and $\div v=0$ on $\partial\Omega$ to obtain 
\begin{align*} 
-(\nabla \div v,\nabla h_t)_{\Omega_t} &=(\div v,\Delta h_t)_{\Omega_t}  
= -(\div v_t, \Delta h)_{\Omega_t}
+ (\div v(t), \Delta h(t))_\Omega - (\div v_0, \Delta h_0)_\Omega \\
&\leq \|\Delta h\|^2_{L^2_t(L^2)}+\frac{1}{2} \Big(\|\div v(t)\|_{L^2}^2 + \|\Delta h(t)\|_{L^2}^2\Big) + \frac{1}{2}\Big(\|\div v_0\|_{L^2}^2 + \|\Delta h_0\|_{L^2}^2\Big). 
\end{align*}
Combination with the previous estimate, noting that $a \|h\|_{L^\infty(L^2)} \le a C R$, and choosing $R$ sufficiently small, we obtain 
\begin{align} \label{Eq:GlobalEst:3}
\frac{b}{2} \|\Delta h(t)\|^2_{L^2} 
+ \frac{3}{4} \|\nabla h_t\|_{L^2_t(L^2)} 
\le \frac{1}{2} \Big( (1+b) \|\Delta h_0\|^2_{L^2} + \|\div v_0\|^2_{L^2} \Big)  
+ \|\Delta h\|^2_{L^2_t(L^2)} 
+ \frac{1}{2} \Big( \|\div v(t)\|^2_{L^2} + \|\Delta h(t)\|^2_{L^2} \Big). 
\end{align}
The last three terms can be absorbed by terms in the previous estimates.

\medskip 
\noindent 
\textbf{Step~4.}
By adding up the estimates \eqref{Eq:GlobalEst:1}--\eqref{Eq:GlobalEst:3}, we obtain 
\begin{align*}
\frac{1}{b} \Big(\|\nabla h(t)\|^2_{L^2} &+  \|\div v(t)\|^2_{L^2} \Big)+ 2 \|\Delta h\|^2_{L^2(L^2)} 
+ \|\Delta h\|^2_{L^2} + b\|\nabla\!\Delta h\|^2_{L^2_t(L^2)} 
+ \|\nabla \div v\|^2_{L^2} 
+ \frac{b}{2} \|\Delta h\|^2_{L^2} 
+ \frac{3}{4} \|\nabla h_t\|^2_{L^2}  \\
&\le 
\frac{1}{b} \Big( \|\nabla h_0\|^2_{L^2} + \|\div v_0\|^2_{L^2}  \Big)
+ \|\Delta h_0\|^2_{L^2} + \|\div v_0\|_{L^2}^2 + 2 C' R \|\nabla h_t\|^2_{L^2(L^2)}  \\
& \qquad \qquad 
+ \Big(\frac{1}{2} + \frac{b}{2}\Big) \|\Delta h_0\|^2_{L^2} + \frac{1}{2} \|\div v_0\|^2 + \|\Delta h\|^2_{L^2(L^2)} 
+ \frac{1}{2} \|\div v\|^2_{L^2} + \frac{1}{2} \|\Delta h\|^2_{L^2}.
\end{align*}
Assuming $2 C' R \le 1/4$, we see that all terms on the right hand side not involving initial values can be absorbed into the left hand side of the estimate, and we arrive at 
\begin{align} \notag
\|\nabla h(t)\|^2_{L^2}  &+ \|\Delta h\|^2_{L^2} 
+ \|\div v(t)\|^2_{L^2} + \|\nabla \div v(t)\|^2_{L^2} 
\|\Delta h\|^2_{L^2_t(L^2)} 
+ \|\nabla\!\Delta h\|^2_{L^2_t(L^2)} 
+ \|\nabla h_t\|^2_{L^2_t(L^2)} \\
&\le C_\infty' \Big(
\|\nabla h_0\|^2_{L^2}
+ \|\Delta h_0\|^2_{L^2} 
+ \|\div v_0\|^2_{L^2}
+ \|\nabla \div v_0\|^2_{L^2} 
\Big). 
\label{eq:final}
\end{align}
Note that the constant $C_\infty'$ here only depends on the domain $\Omega$ and the model parameters, and the estimate is valid for all solutions $(v,h)$ of \eqref{eq:sys1}--\eqref{eq:sys4} on $[0,T]$ with $T>0$ fixed and initial values satisfying $\cal E(v_0,h_0) \le \eps_\infty$ sufficiently small.

\medskip 
\noindent 
\textbf{Step~5.}
A quick inspection of the arguments used in Steps~1--4 shows that we have actually proven the following statement: There exists $R>0$, $\eps(R)>0$, and $T(R)>0$,  such that for all $0<T \le T(R)$ and initial values $(v_0,h_0)$ satisfying $\cal E(v_0,h_0) \le \eps(R)$, all local solutions of \eqref{eq:sys1}--\eqref{eq:sys4} on $[0,T]$ satisfy the above estimate. 
Without loss of generality, we may assume $C_\infty' \eps(R) \le R/2$; otherwise we simply reduce $\eps(R)$ accordingly. 
Furthermore, we can choose $T(R) = \sup\{T<0: \text{such that the estimate \eqref{eq:final} holds}\}$.
Now assume $T=T(R)<\infty$ and let $(v,h)$ be an arbitrary solution  on $[0,T]$ under consideration. 
Then by \eqref{eq:final} and the assumption on $\eps(R)$, we see that $(v,h) \in \cal V_T^{R/2} \times \cal H_T^{R/2}$, and we may extend the solution in time, say to an interval $[0,T+\delta]$. 
By the arguments used for proving local solvability, we see that $(v,h) \in \cal V_T^R \times \cal H_T^R$ for $\delta>0$ sufficiently small. Moreover, the above estimate \eqref{eq:final}  still holds. 
This is a contradiction to assuming $T(R)$ finite; hence $T(R)=\infty$, i.e., the local solutions provided by Proposition~\ref{pro:local} can be extended to the whole time interval and the estimate \eqref{eq:final} still remains true.
The proof is concluded by utilizing that the norms in \eqref{eq:final} and \eqref{eq:global} are equivalent.
\end{proof}

\subsubsection*{Proof of Theorem~\ref{thm:main}}
Proposition~\ref{pro:global} establishes the existence of a global solution $(v,h)$ to \eqref{eq:sys1}--\eqref{eq:sys4} on $[0,\infty)$ together with uniform bounds \eqref{eq:global}. 
Global uniqueness follows already from the local uniqueness result. 
To prove the decay estimate, we proceed as follows:
With the same arguments as used for proving \eqref{eq:global}, one can show that 
\begin{align*}
\|h(t)\|_{H^2}^2 + \|\div v(t)\|_{H^1}^2 + \int_s^t \|h(r)\|^2_{H^3} + \|h_t(r)\|^2_{H^1} + \|\div v(r)\|^2_{H^2} \, dr \le C_\infty \Big(\|h(s)\|_{H^2}^2 + \|\div v(s)\|^2_{H^2}\Big) 
\end{align*}
for all $0 \le s \le t < \infty$. By dropping some of the terms on the left hand side and taking the limit $t \to \infty$, we thus obtain 
\begin{align*}
\int_s^\infty E(r) \, dr \le C_\infty E(s), \qquad \text{for all } s \ge 0,
\end{align*}
where we used $E(s) = \|h(s)\|^2_{H^2} + \|\div v(s)\|^2_{H^1}$ for abbreviation. 
The decay estimate of Theorem~\ref{thm:main} then follows immediately by application of the Haraux--Lagnese inequality; see e.g.~\cite[Theorem 1.5.9]{qin2017analytic}. \qed

\section{Numerical illustration} \label{sec:num}

In this section, we discuss the structure-preserving discretization of the nonlinear acoustic wave propagation models under investigation. After this, we also present some simulation results for the Westervelt, the Kutsnezov, and the Rasmussen equation, illustrating their similarities and differences.

\subsection{Structure-preserving discretization}

For the space discretization, one can use a conforming Galerkin approximation by mixed finite elements to approximate the functions $(v,h)$. In view of our analysis, a natural choice of spaces is
\begin{align} \label{eq:spaces}
\mathbb{V}_h = P_k(\cal T_h)^n \qquad \text{and} \qquad 
\mathbb{H}_h = P_{k+1}(\cal T_h) \cap H_0^1(\Omega),
\end{align}
where $P_k(\cal T_h)$ is the space of piecewise polynomials of order $\le k$ defined over some conforming simplicial mesh $\cal T_h$ of $\Omega$. 
The semi-discretization of \eqref{eq:sys1}--\eqref{eq:sys4} then reads as follows: Find $(v_h,h_h) : [0,T] \to \mathbb{V}_h \times \mathbb{H}_h$ such that $v_h(0)=v_{0,h}$, $h_h(0)=h_{h,0}$ given, and such that
\begin{align}
(v_{h,t},w_h)_\Omega + (\nabla h_h,w_h)_\Omega &= 0 \label{eq:weak1h}\\
((1-a h_h) h_{h,t},q_h)_\Omega - (v_h,\nabla q_h)_\Omega &= -\, b (\nabla h_h, \nabla q_h)_\Omega - (c-d) (v_h \cdot \nabla h_h, q_h)_\Omega + d (h_h, v_h \cdot \nabla q_h)_\Omega \label{eq:weak2h}
\end{align}
for all $w_h \in \mathbb{V}_h$ and $q_h \in \mathbb{H}_h$, and all $0 \le t \le T$. 
Assuming $\|h_{h,0}\|_{L^\infty}$ sufficiently small, the local existence of a unique discrete solution can be deduced from the Cauchy--Lipschitz theorem.
As a direct consequence of the particular structure of the variational identities that define the discrete solution, we obtain the following result. 
\begin{proposition}[Structure-preservation, discrete power balance] $ $\\
Let $\mathbb{V}_h$, $\mathbb{H}_h$ be chosen as above, and let $(v_h,h_h)$ denote a discrete solution of \eqref{eq:weak1h}--\eqref{eq:weak2h}. 
Then 
\begin{align*}
\frac{d}{dt} \cal H(v_h,v_h) 
= - b \|\nabla h_h\|_{L^2}^2 + (2d-c) (v_h \cdot \nabla h_h, h_h)_\Omega.
\end{align*}
In particular, for $c=2d$ the discrete energy is monotonically decreasing over time. 
\end{proposition}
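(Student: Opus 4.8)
The plan is to mirror the proof of Proposition~\ref{pro:weak}, exploiting the fact that the semi-discretization \eqref{eq:weak1h}--\eqref{eq:weak2h} is a \emph{conforming} Galerkin method, so that the discrete solution itself is an admissible test function. Since $v_h(t) \in \mathbb{V}_h \subset L^2(\Omega)^n$ and $h_h(t) \in \mathbb{H}_h \subset H_0^1(\Omega)$, I would choose $w_h = v_h$ in \eqref{eq:weak1h} and $q_h = h_h$ in \eqref{eq:weak2h}. This is precisely the discrete counterpart of testing the continuous weak form with $(w,q)=(v,h)$, and it is the structural property inherited under Galerkin projection.

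First I would differentiate the Hamiltonian. Because $(v_h,h_h)$ is a finite-dimensional, continuously differentiable function of time (guaranteed by the Cauchy--Lipschitz theorem under the stated smallness assumption on $\|h_{h,0}\|_{L^\infty}$), the chain rule gives
\begin{align*}
\ddt \mathcal{H}(v_h,h_h) = (v_{h,t},v_h)_\Omega + \big((1-a h_h) h_{h,t}, h_h\big)_\Omega,
\end{align*}
exactly as in the continuous case, where the factor $(1-a h_h)$ arises from the cubic term $-\tfrac{a}{3}h_h^3$ in $\mathcal{H}$.

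Next I would insert the tested equations. Choosing $w_h=v_h$ in \eqref{eq:weak1h} yields $(v_{h,t},v_h)_\Omega = -(\nabla h_h,v_h)_\Omega$, while choosing $q_h=h_h$ in \eqref{eq:weak2h} supplies an expression for $\big((1-a h_h)h_{h,t},h_h\big)_\Omega$. Upon adding the two identities, the cross terms $(\nabla h_h,v_h)_\Omega$ and $(v_h,\nabla h_h)_\Omega$ cancel, since they denote the same $L^2$ scalar product. The sole computational point that remains is the algebraic simplification of the two nonlinear contributions: using $(h_h, v_h\cdot\nabla h_h)_\Omega = (v_h\cdot\nabla h_h, h_h)_\Omega$, the coefficients combine as $-(c-d)+d = 2d-c$, producing exactly the claimed term $(2d-c)(v_h\cdot\nabla h_h, h_h)_\Omega$ together with the dissipation $-b\|\nabla h_h\|_{L^2}^2$. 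Specializing to $c=2d$ then makes the indefinite term vanish, so that $\ddt \mathcal{H}(v_h,h_h) = -b\|\nabla h_h\|_{L^2}^2 \le 0$, which establishes monotone decay.

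I do not anticipate a genuine obstacle. In contrast to the continuous setting, no integration by parts is performed at the discrete level — the boundary terms are already absorbed into the weak form \eqref{eq:weak2h} — and all function spaces are finite-dimensional, so every manipulation is elementary and exact. The single idea worth emphasizing is conformity: because $v_h$ and $h_h$ live in the same spaces as the test functions, the continuous energy identity survives verbatim on the discrete level, which is the essence of structure preservation.
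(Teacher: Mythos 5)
Your proposal is correct and follows the same route as the paper: differentiate the Hamiltonian to get $(v_{h,t},v_h)_\Omega + ((1-ah_h)h_{h,t},h_h)_\Omega$, then substitute the two discrete variational identities tested with $(w_h,q_h)=(v_h,h_h)$, after which the cross terms cancel and the nonlinear coefficients combine to $-(c-d)+d = 2d-c$. Your write-up simply makes explicit the algebra that the paper leaves as ``follows immediately,'' including the justification of time-differentiability via Cauchy--Lipschitz, which is a fine addition but not a different argument.
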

\begin{proof}
Similarly to the continuous level, the result follows by formally differentiating the energy, which yields
\begin{align*}
\frac{d}{dt} \cal H(v_h,h_h) 
&= (v_{h,t}, v_h)_\Omega + ((1-a h_h) h_{h,t})_\Omega.
\end{align*}
The right-hand side amounts to the first terms in \eqref{eq:weak1h}--\eqref{eq:weak2h} with test function $w_h=v_h$ and $q_h=h_h$. 
The result then follows immediately from the two discrete variational identities. 
\end{proof}
\begin{remark}
The power balance and its proof for the continuous problem are inherited automatically by conforming Galerkin discretization in space. 
Noting that the energy $\cal H(v,h)$ is convex whenever $\|h\|_{L^\infty}$ is sufficiently small, one can employ variational time-discretization strategies to obtain corresponding results as well for fully-discrete schemes; see \cite{egger2019structure,van2014port} for details.  
\end{remark}

\subsection{Fully discrete scheme}
To facilitate the reproduction of our numerical tests, we consider a traditional time-discretization scheme in our numerical tests, namely the implicit mid-point rule. 
Let $\Delta t = T/N$ be a fixed time step, and let $t^n = n \Delta t$ be discrete time points. Given $a^n$, $n \ge 0$, we denote by $a_{\Delta t}^{n-1/2} = \frac{1}{\Delta t} (a^{n} - a^{n-1})$ the central difference quotient at the time $t^{n-1/2}$, and write $a^{n-1/2}=\frac{1}{2}(a^{n} + a^{n-1})$ for the average resp. linear interpolation at $t^{n-1/2}$. 
As a fully discrete approximation for \eqref{eq:sys1}--\eqref{eq:sys4}, we then consider the following scheme: Let $v_h^0=v_{h,0}$ and $h_h^0 = h_{h,0}$ given. Find $(v_h^n,h_v^n) \in \mathbb{V}_h \times \mathbb{H}_h$, $n \ge 1$ such that 
\begin{align*}
(v_{h,\Delta t}^{n-1/2},w_h)_\Omega + (\nabla h_h^{n-1/2},w_h)_\Omega &= 0 
\\
((1-a h_h^{n-1/2}) h_{h,\Delta t}^{n-1/2},q_h)_\Omega - (v_h^{n-1/2},\nabla q_h)_\Omega &= -\, b (\nabla h_h^{n-1/2}, \nabla q_h)_\Omega - (c-d) (v_h^{n-1/2} \cdot \nabla h_h^{n-1/2}, q_h)_\Omega + d (h_h^{n-1/2}, v_h^{n-1/2} \cdot \nabla q_h)_\Omega 
\end{align*}
for all $w_h \in \mathbb{V}_h$ and $q_h \in \mathbb{H}_h$, and all $ 0 < n \le N$. 
Let us note that we cannot prove strict energy--dissipation for this time-discretization strategy. Considering the properties of the semi-discrete problem, we, however, still expect a similar (almost) passive behavior of this fully discrete scheme, at least for sufficiently small time steps.

\medskip 

\noindent \textbf{Simulation setup.}
In our numerical tests, we employ the above scheme with polynomial order $k=1$ in \eqref{eq:spaces} for the space discretization. 
The nonlinear systems arising at every time step of the fully discrete scheme are solved by the Newton method.

\subsection{A one-dimensional example}

We study a similar setting as in \cite[Section 7.1.4]   {fritz2018well}, which was originally adapted from \cite[Chapter 5]{kaltenbacher2007numerical}. As a computational domain, we consider the interval $\Omega=(0,0.4)$ and the time span $[0,10^{-4}]$. We use uniform grids in space and time with $\Delta t=10^{-7}$ and $\Delta x=10^{-3}$. 
The model parameters for our computations are chosen as 
$b=6 \cdot 10^{-9}$, $c_0=1500$, $\gamma=6$. 
We recall that $\gamma$ and $c_0^2$ set the parameter $a$ in the equation; see below \cref{eq:model}.
Furthermore, we choose the initial values $$
v_0 = 0 \qquad \text{and} \qquad h_0=3\cdot 10^5 \cdot \exp\!\bigg(\!\!-\frac{(x-0.2)^2}{2\cdot 0.01^2}\bigg),
$$
that is, initially the wave is centered in the space domain and has its maximum at $x=0.2$ with $h(0,0.2)=3 \cdot 10^5$. 

\begin{figure}[htb!]
\centering
    \begin{tikzpicture}[scale=0.8]
	\definecolor{color0}{HTML}{4E79A7}
	\definecolor{color1}{HTML}{F28E2B}
	\definecolor{color2}{HTML}{E15759}
	\definecolor{color3}{HTML}{76B7B2}
	\definecolor{color4}{HTML}{59A14F}
	\definecolor{color5}{HTML}{EDC948}
	\definecolor{color6}{HTML}{B07AA1}
	\definecolor{color7}{HTML}{FF9DA7}
	\definecolor{color8}{HTML}{9C755F}
	\definecolor{color9}{HTML}{BAB0AC}
	\begin{axis}[
		height=.3\textheight,
        width=.46\textwidth,
		xmin=0.1, xmax=0.3,
		ymin=0, ymax=1.5e5,
		ytick={0,0.5e5,1e5,1.5e5},
		xtick={0.1,0.2,0.3},
        xticklabels={\phantom{0.1},\phantom{0.2},\phantom{0.3}},
		]
		\addplot[ultra thick,color2,smooth,each nth point=1] table [x="Points:0", y="f_14-0", col sep=comma] {Data/wes_80.txt};
		\addplot[ultra thick,color1,smooth,each nth point=1] table [x="Points:0", y="f_14-0", col sep=comma] {Data/kuz_80.txt};
  		\addplot[ultra thick,color0,smooth,each nth point=1] table [x="Points:0", y="f_14-0", col sep=comma] {Data/ras_80.txt};
	\end{axis}
\end{tikzpicture}
\begin{tikzpicture}[scale=0.8]
	\definecolor{color0}{HTML}{4E79A7}
	\definecolor{color1}{HTML}{F28E2B}
	\definecolor{color2}{HTML}{E15759}
	\definecolor{color3}{HTML}{76B7B2}
	\definecolor{color4}{HTML}{59A14F}
	\definecolor{color5}{HTML}{EDC948}
	\definecolor{color6}{HTML}{B07AA1}
	\definecolor{color7}{HTML}{FF9DA7}
	\definecolor{color8}{HTML}{9C755F}
	\definecolor{color9}{HTML}{BAB0AC}
	\begin{axis}[
		height=.3\textheight,
        width=.46\textwidth,
		legend pos=outer north east,
		legend cell align={left},
		xmin=0.1, xmax=0.3,
		ymin=0, ymax=1.5e5,
		ytick={0,0.5e5,1e5,1.5e5},
		xtick={0.1,0.2,0.3},
          scaled y ticks=false,
          xticklabels={\phantom{0.1},\phantom{0.2},\phantom{0.3}},
          yticklabels={},
		]
		\addplot[ultra thick,color2,smooth,each nth point=1] table [x="Points:0", y="f_14-0", col sep=comma] {Data/wes_120.txt};
		\addlegendentry{Westervelt}
		\addplot[ultra thick,color1,smooth,each nth point=1] table [x="Points:0", y="f_14-0", col sep=comma] {Data/kuz_120.txt};
		\addlegendentry{Kuznetsov}
  		\addplot[ultra thick,color0,smooth,each nth point=1] table [x="Points:0", y="f_14-0", col sep=comma] {Data/ras_120.txt};
		\addlegendentry{Rasmussen}
	\end{axis}
\end{tikzpicture} \\
\begin{tikzpicture}[scale=0.8]
	\definecolor{color0}{HTML}{4E79A7}
	\definecolor{color1}{HTML}{F28E2B}
	\definecolor{color2}{HTML}{E15759}
	\definecolor{color3}{HTML}{76B7B2}
	\definecolor{color4}{HTML}{59A14F}
	\definecolor{color5}{HTML}{EDC948}
	\definecolor{color6}{HTML}{B07AA1}
	\definecolor{color7}{HTML}{FF9DA7}
	\definecolor{color8}{HTML}{9C755F}
	\definecolor{color9}{HTML}{BAB0AC}
	\begin{axis}[
		height=.3\textheight,
        width=.46\textwidth,
		xmin=0.1, xmax=0.3,
		ymin=0, ymax=1.5e5,
		ytick={0,0.5e5,1e5,1.5e5},
		xtick={0.1,0.2,0.3},
		]
		\addplot[ultra thick,color2,smooth,each nth point=1] table [x="Points:0", y="f_14-0", col sep=comma] {Data/wes_160.txt};
		\addplot[ultra thick,color1,smooth,each nth point=1] table [x="Points:0", y="f_14-0", col sep=comma] {Data/kuz_160.txt};
  		\addplot[ultra thick,color0,smooth,each nth point=1] table [x="Points:0", y="f_14-0", col sep=comma] {Data/ras_160.txt};
	\end{axis}
\end{tikzpicture}
\begin{tikzpicture}[scale=0.8]
	\definecolor{color0}{HTML}{4E79A7}
	\definecolor{color1}{HTML}{F28E2B}
	\definecolor{color2}{HTML}{E15759}
	\definecolor{color3}{HTML}{76B7B2}
	\definecolor{color4}{HTML}{59A14F}
	\definecolor{color5}{HTML}{EDC948}
	\definecolor{color6}{HTML}{B07AA1}
	\definecolor{color7}{HTML}{FF9DA7}
	\definecolor{color8}{HTML}{9C755F}
	\definecolor{color9}{HTML}{BAB0AC}
	\begin{axis}[
		height=.3\textheight,
        width=.46\textwidth,
		xmin=0.1, xmax=0.3,
		ymin=0, ymax=1.5e5,
		ytick={0,0.5e5,1e5,1.5e5},
		xtick={0.1,0.2,0.3},
        scaled y ticks=false,
        yticklabel=\empty
		]
  		\addplot[ultra thick,color2,smooth,each nth point=1] table [x="Points:0", y="f_14-0", col sep=comma] {Data/wes_240.txt};
		\addplot[ultra thick,color1,smooth,each nth point=1] table [x="Points:0", y="f_14-0", col sep=comma] {Data/kuz_240.txt};
  		\addplot[ultra thick,color0,smooth,each nth point=1] table [x="Points:0", y="f_14-0", col sep=comma] {Data/ras_240.txt};
	\end{axis}
\end{tikzpicture}
\hspace*{1.75cm}
    \caption{Comparison of simulated enthalpy profiles $h(t)$ for the Westervelt, Kuznetsov and Rasmussen equations. The images correspond to  time $t \in \{1, 1.5,2,3\} \cdot 10^{-5}$ (from top left to bottom right).}
    \label{Fig:WaveEquations}
\end{figure}
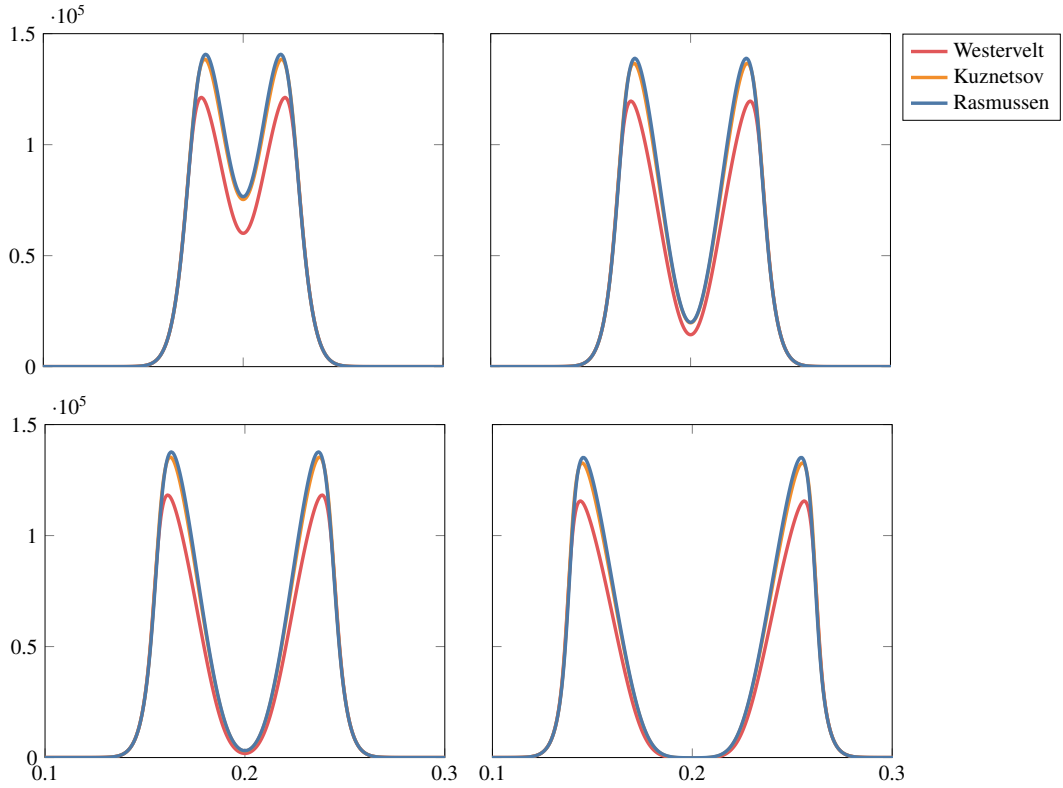
In \cref{Fig:WaveEquations} we depict some snapshots of the enthalpy $h(t)$ obtained by simulation of the Westervelt, Kuznetsov, and Rasmussen equations, which we use to conduct a comparative study of the different models.
A first fundamental observation is that the essential support of the numerical solutions is consistent across all three equations, which illustrated that the waves propagate at the same speed. This was achieved by making an appropriate choice of the parameters. 
However, despite the consistent support, the wave profiles differ significantly between the three models. In particular, the Westervelt equation produces a wave profile with a steeper spike, particularly in the direction of wave propagation. In contrast, the Kuznetsov and Rasmussen equations yield wave profiles that are more centered, with a smoother distribution of wave intensity. 
We further note that the Kuznetsov and Rasmussen equations yield almost identical wave profiles, with only a minor difference in the maximal enthalpy value. 
This indicates that similar accuracy as for the Kuznetsov equations can be obtained with the Rasmussen model in practice. However, the latter additionally guarantees passivity. 
In addition, we plot the evolution of the Hamiltonian energy in \cref{Fig:Energy}. 
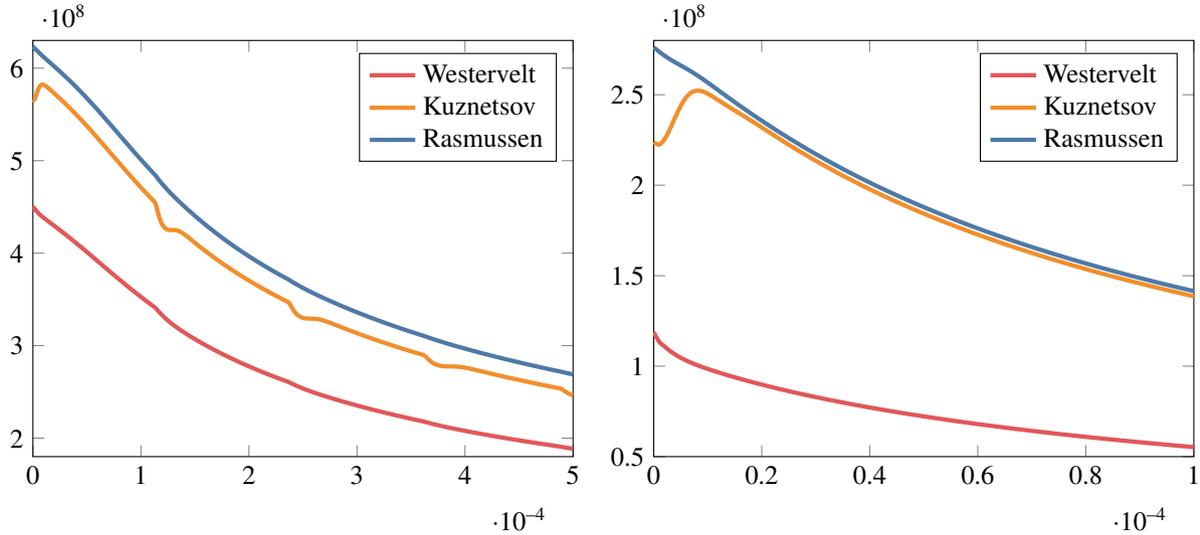
\begin{figure}[htb!]
\centering
    \begin{tikzpicture}
	\definecolor{color0}{HTML}{4E79A7}
	\definecolor{color1}{HTML}{F28E2B}
	\definecolor{color2}{HTML}{E15759}
	\definecolor{color3}{HTML}{76B7B2}
	\definecolor{color4}{HTML}{59A14F}
	\definecolor{color5}{HTML}{EDC948}
	\definecolor{color6}{HTML}{B07AA1}
	\definecolor{color7}{HTML}{FF9DA7}
	\definecolor{color8}{HTML}{9C755F}
	\definecolor{color9}{HTML}{BAB0AC}
	\begin{axis}[
		height=.3\textheight,
        width=.49\textwidth,
		legend pos=north east,
		legend cell align={left},
        restrict y to domain=1.8e8:6.5e8,
        ymin=1.8e8, ymax=6.3e8,
		xmin=0, xmax=0.0005,
		]
		\addplot[ultra thick,color2,smooth,each nth point=2] table [x="t", y="energy", col sep=space]{Data/wesenergy.txt};
		\addlegendentry{Westervelt}
  \addplot[ultra thick,color1,smooth,each nth point=2] table [x="t", y="energy", col sep=space]{Data/kuzenergy.txt};
		\addlegendentry{Kuznetsov}
  \addplot[ultra thick,color0,smooth,each nth point=2] table [x="t", y="energy", col sep=space]{Data/rasenergy.txt};
		\addlegendentry{Rasmussen}
	\end{axis}
\end{tikzpicture}
    \begin{tikzpicture}
	\definecolor{color0}{HTML}{4E79A7}
	\definecolor{color1}{HTML}{F28E2B}
	\definecolor{color2}{HTML}{E15759}
	\definecolor{color3}{HTML}{76B7B2}
	\definecolor{color4}{HTML}{59A14F}
	\definecolor{color5}{HTML}{EDC948}
	\definecolor{color6}{HTML}{B07AA1}
	\definecolor{color7}{HTML}{FF9DA7}
	\definecolor{color8}{HTML}{9C755F}
	\definecolor{color9}{HTML}{BAB0AC}
	\begin{axis}[
		height=.3\textheight,
        width=.49\textwidth,
        legend style={fill=white, fill opacity=0.6, draw opacity=1,text opacity=1},
		legend pos=north east,
		legend cell align={left},
        ymin=.5e8, ymax=2.8e8,
		xmin=0, xmax=0.0001,
		]
		\addplot[ultra thick,color2,smooth,each nth point=2] table [x="t", y="energy", col sep=space]{2D/wesenergy2.txt};
		\addlegendentry{Westervelt}
  \addplot[ultra thick,color1,smooth,each nth point=2] table [x="t", y="energy", col sep=space]{2D/kuzenergy2.txt};
		\addlegendentry{Kuznetsov}
 \addplot[ultra thick,color0,smooth,each nth point=2] table [x="t", y="energy", col sep=space]{2D/rasenergy2.txt};
		\addlegendentry{Rasmussen}
	\end{axis}
\end{tikzpicture}
    \caption{Comparison of the Hamiltonian $\mathcal{H}(v,h)$ for the Westervelt, Kuznetsov and Rasmussen equation in the one-dimensional (left) and two-dimensional example (right).}
    \label{Fig:Energy}
\end{figure}
The energy values differ for each model due to variations in the parameter $a$. We observe that the energies exhibit an exponential decay rate. However, in the case of the Kuznetsov equation, the Hamiltonian is not a true Hamiltonian of the system since it does not satisfy
$c=2d$, a necessary condition for the energy's decay as noted in \cref{pro:weak}. Interestingly, the energy initially increases and shows bumps at approximately $t \in \{1.2,2.5,3.8\} \cdot 10^{-4}$. After these bumps, the energy shows a slight increase.

\subsection{A two-dimensional example}
Next, we consider a two-dimensional example in the domain $\Omega=(0,0.4)^2$. 
We set $b=6 \cdot 10^{-9}$, $c_0=1500$, and $\gamma=1$. The initial data for the two-dimensional problem are chosen as
\begin{align*}
h_0=1.2\cdot 10^6 \cdot \exp\!\bigg(\!\!-\frac{(x_1-0.2)^2+(x_2-0.2)^2}{2\cdot 0.01^2}\bigg).
\end{align*}
We compare the evolution of the total specific enthalpy $h$ for the linear damped wave equation ($a=c=d=0$) and the Westervelt, Kuznetsov, and Rasmussen equations in \cref{Fig:2D}. For all models, the initial signal propagates in a ring-like form from the center towards the boundary. The peak in the linear damped wave equation evolves slower than the nonlinear acoustic equations, i.e., the diameter of the ring is somewhat smaller. 
The speed of propagation for the nonlinear models is very similar. While the solutions for the Kuznetsov and Rasmussen equations are nearly indistinguishable, the peak amplitude of the solution to the Westervelt equation is somewhat smaller than for the other models, and hence displayed lighter. This observation is consistent with our findings in the one-dimensional case, as shown in \cref{Fig:WaveEquations}. 
Similar to the one-dimensional example, see~\cref{Fig:Energy}, the energy for the Kuznetsov equation exhibits an initial increase, wheile the Westervelt, Rasmussen, and linear wave equation show the expected monotone decay behavior illustrating their strict passivity.

\newlength{\imagewidth}
\settowidth{\imagewidth}{\includegraphics{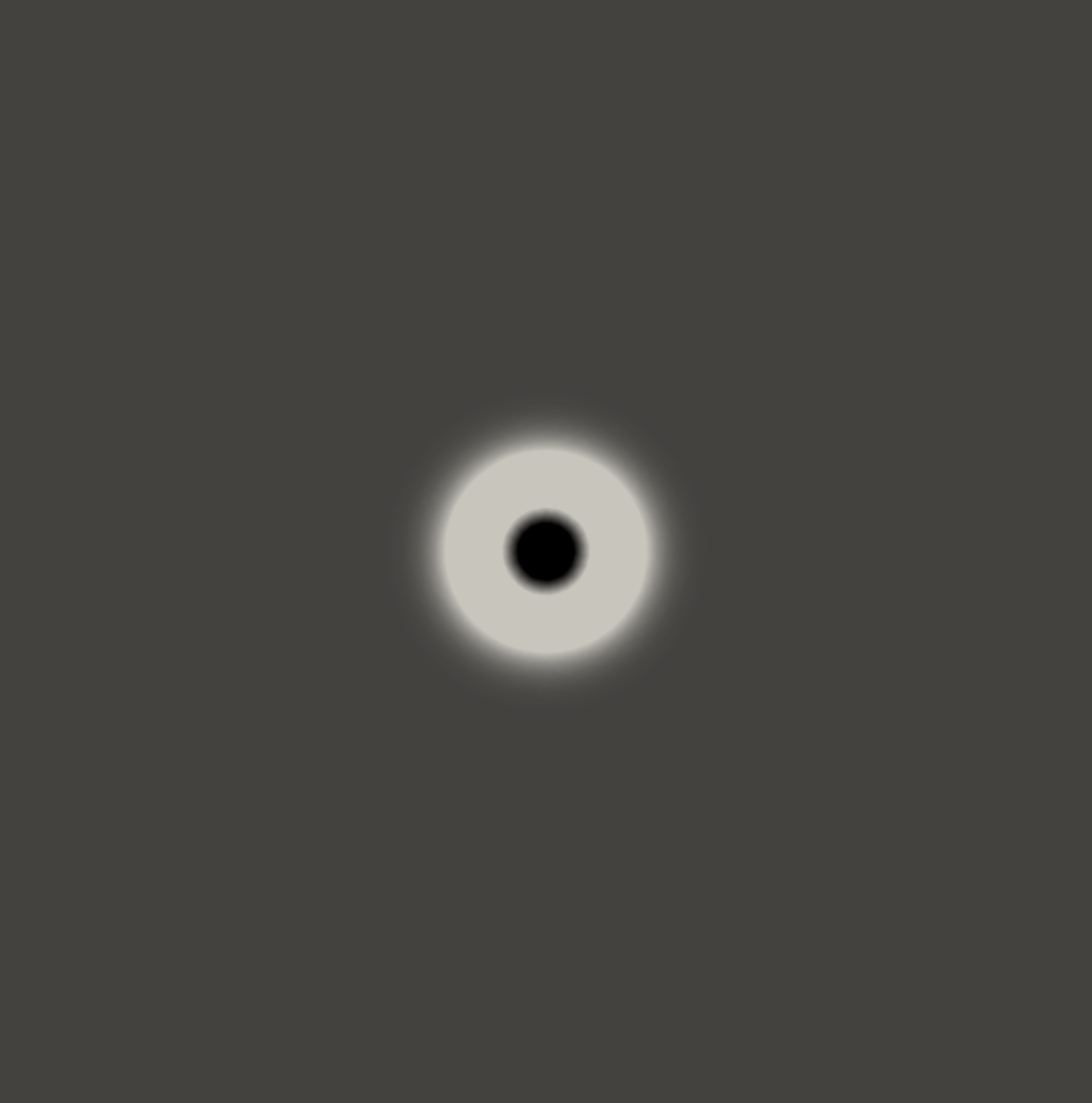}}

\begin{figure}[htb!]
\centering
\begin{minipage}{.3\textwidth}
\includegraphics[trim=0 0.5\imagewidth{} 0.5\imagewidth{} 0, clip, width = .49\textwidth]{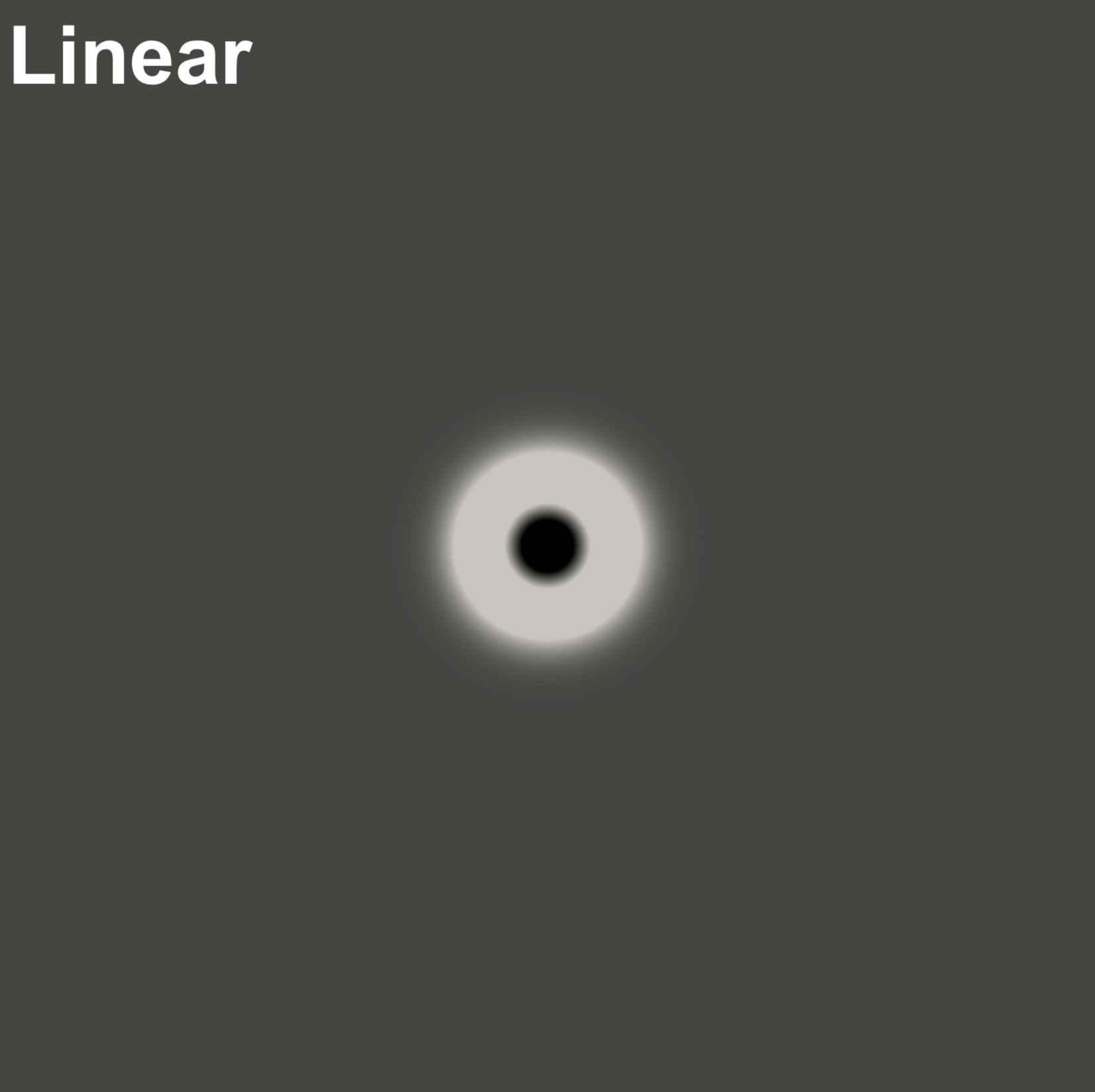}\hspace{-.08cm}
\includegraphics[trim=0.5\imagewidth{} 0.5\imagewidth{} 0 0, clip, width = .49\textwidth]{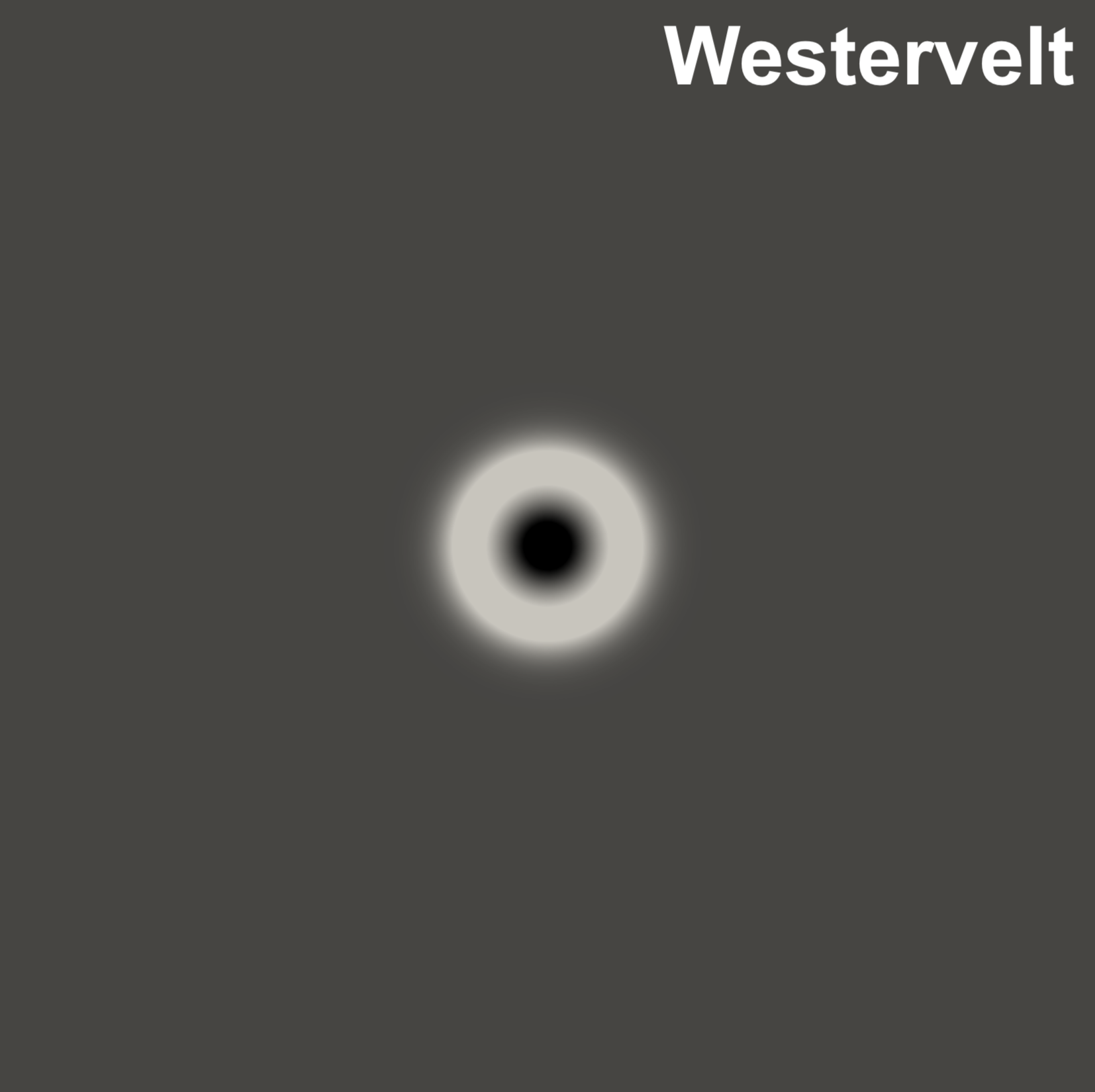}\\[.01cm]
\includegraphics[trim=0 0 0.5\imagewidth{} 0.5\imagewidth{}, clip, width = .49\textwidth]{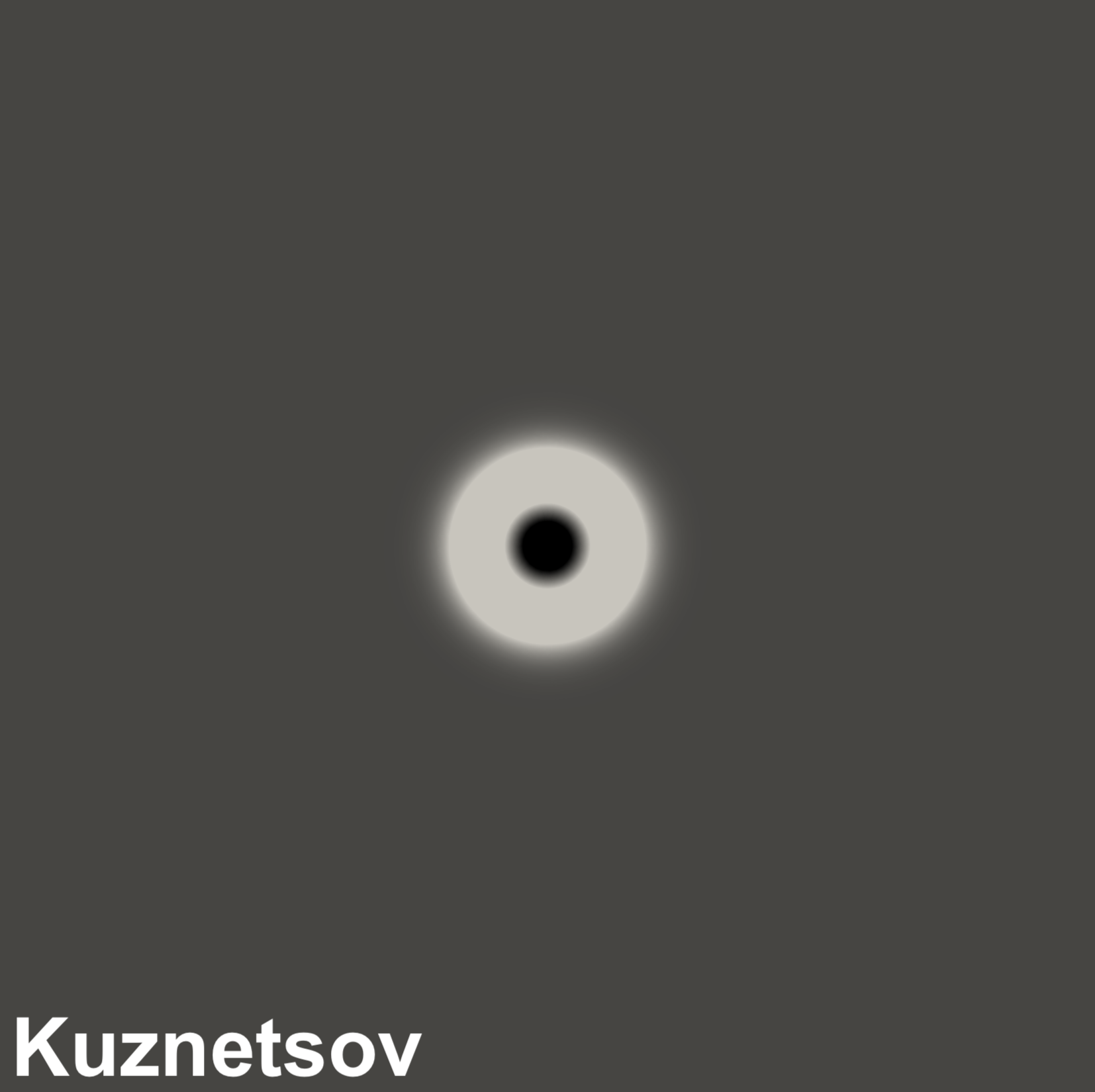}\hspace{-.08cm}
\includegraphics[trim=0.5\imagewidth{} 0 0 0.5\imagewidth{}, clip, width = .49\textwidth]{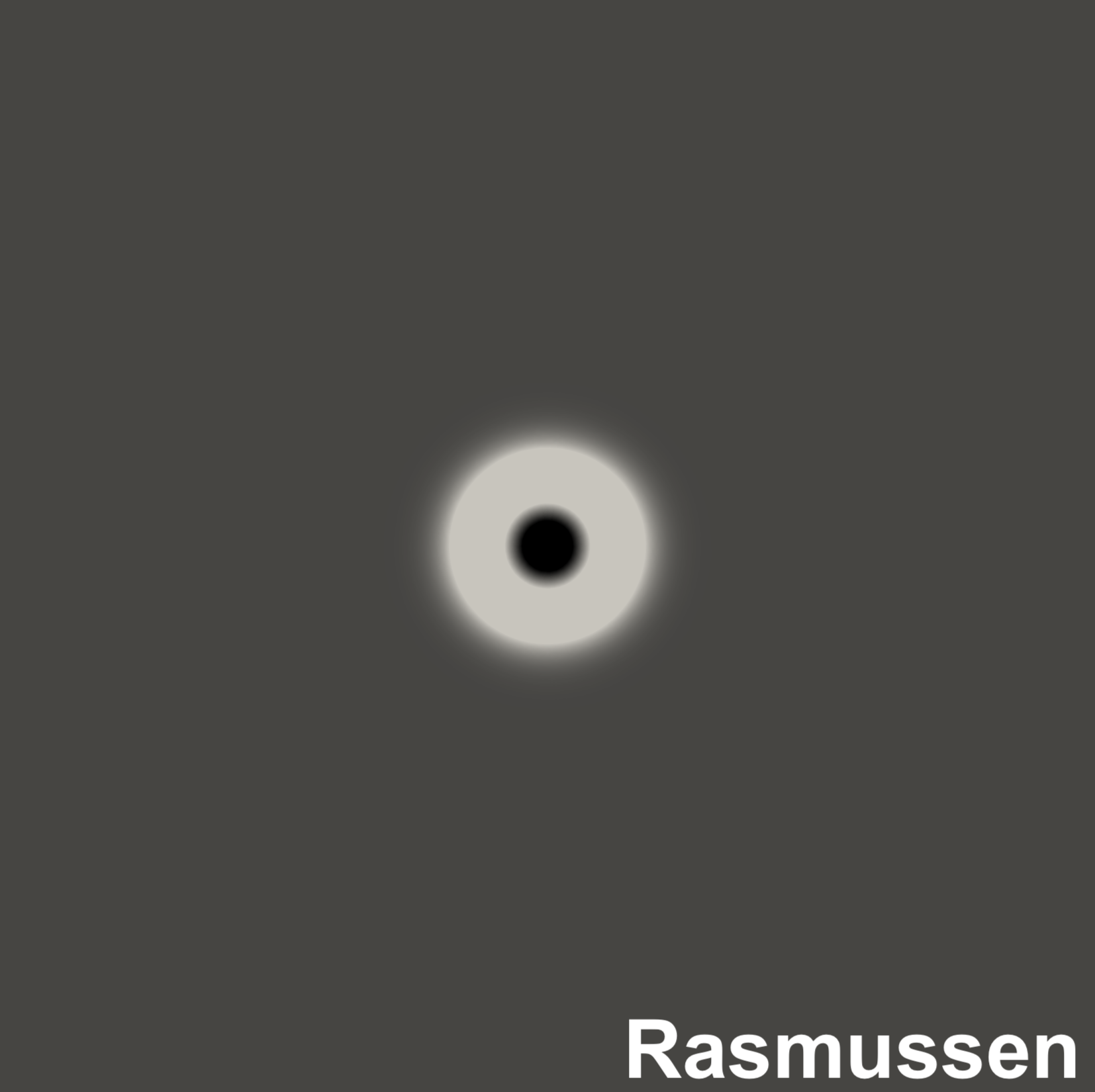}
\end{minipage}\hspace{.8cm}
\begin{minipage}{.3\textwidth}
\includegraphics[trim=0 0.5\imagewidth{} 0.5\imagewidth{} 0, clip, width = .49\textwidth]{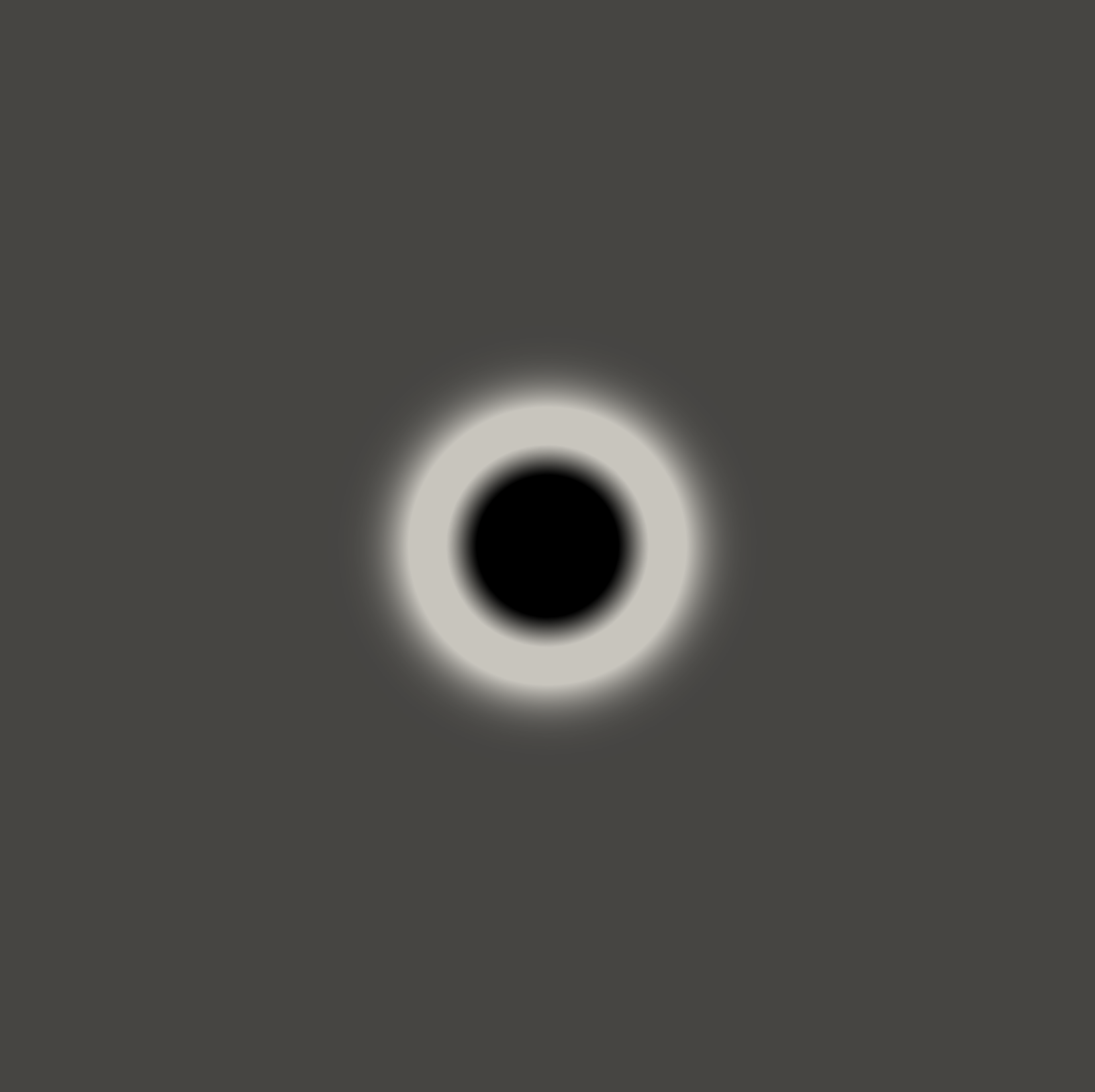}\hspace{-.1cm}
\includegraphics[trim=0.5\imagewidth{} 0.5\imagewidth{} 0 0, clip, width = .49\textwidth]{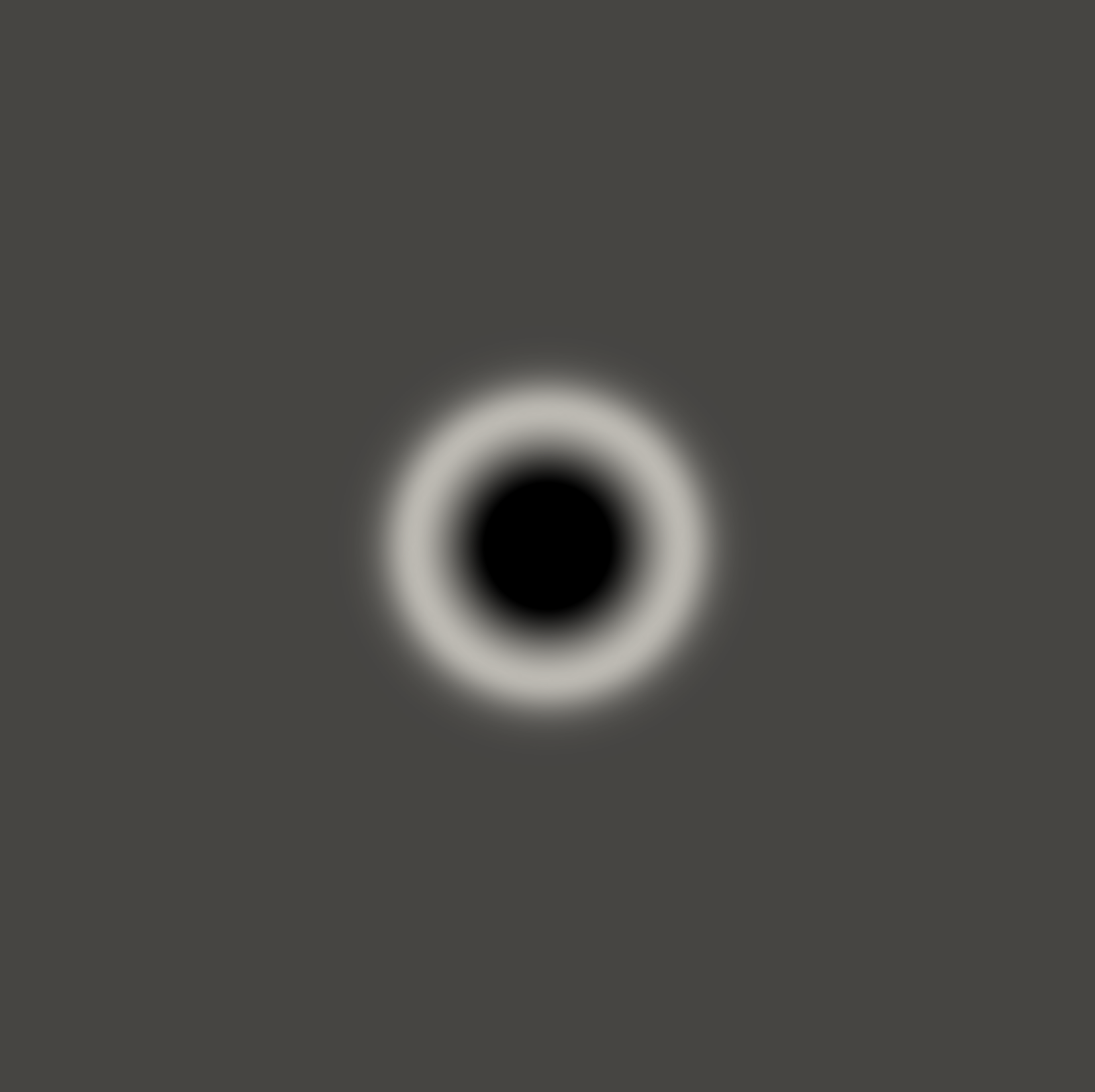}\\[0cm]
\includegraphics[trim=0 0 0.5\imagewidth{} 0.5\imagewidth{}, clip, width = .49\textwidth]{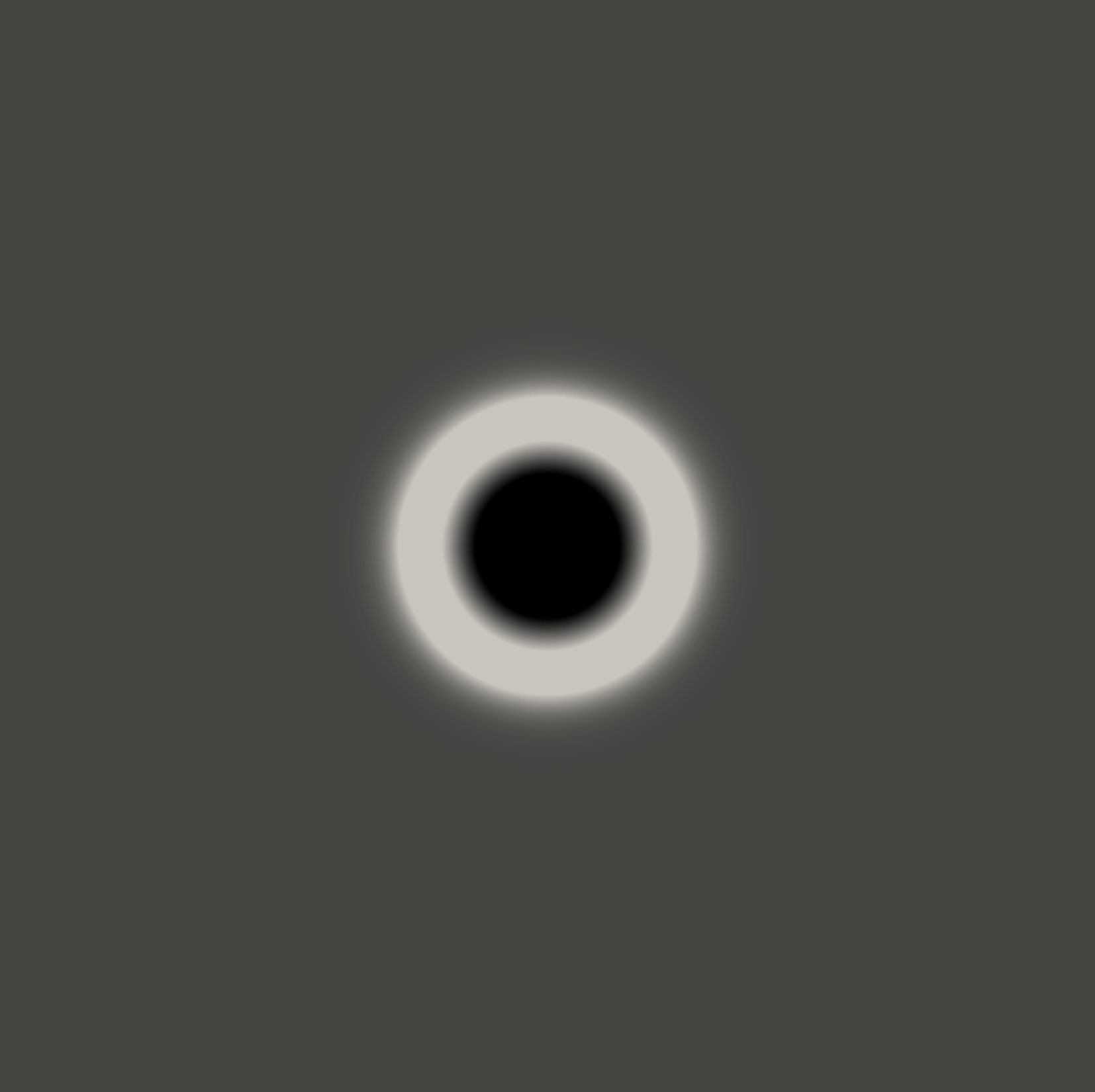}\hspace{-.1cm}
\includegraphics[trim=0.5\imagewidth{} 0 0 0.5\imagewidth{}, clip, width = .49\textwidth]{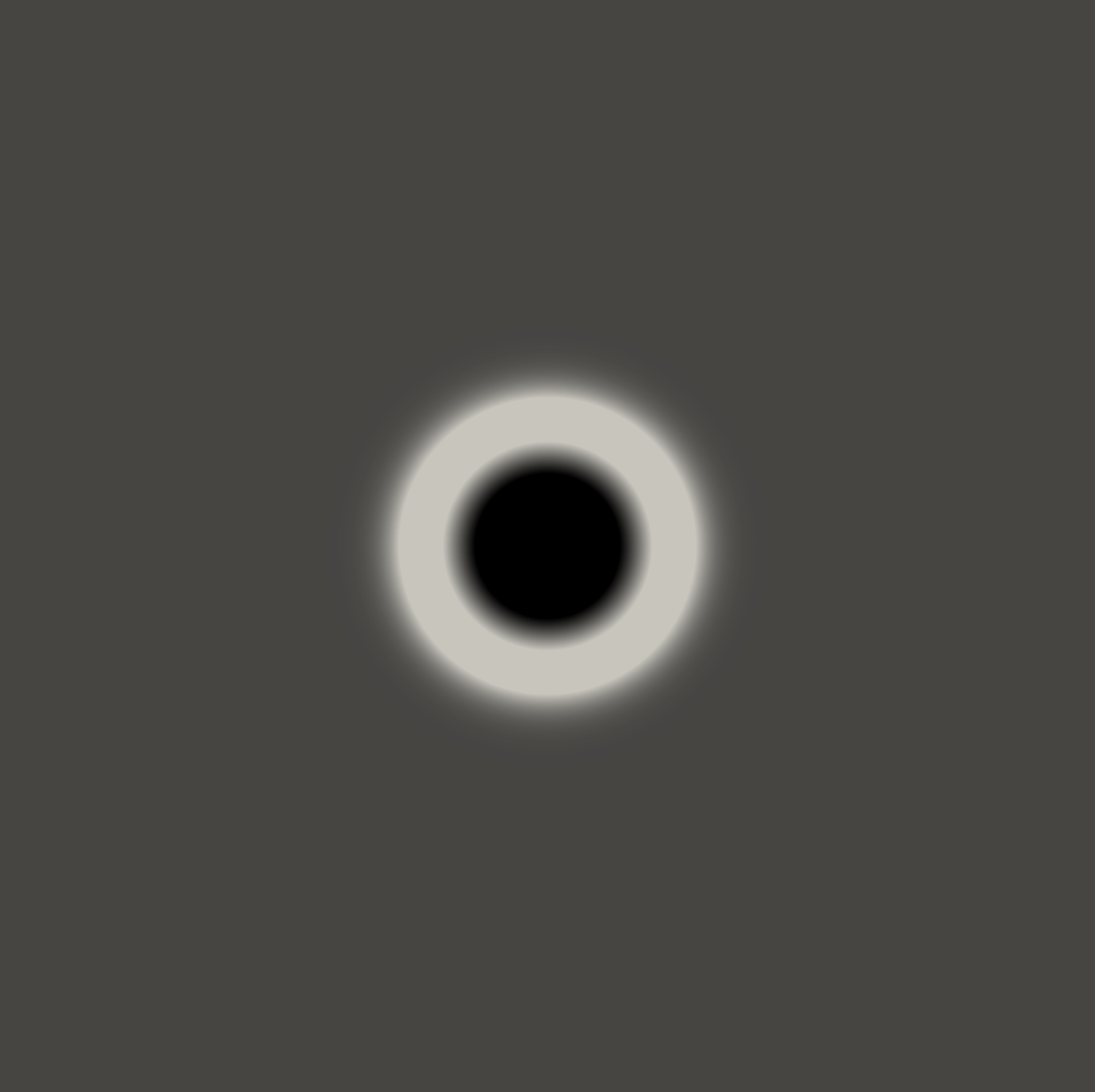}
\end{minipage}\hspace{.8cm} 
\hphantom{\begin{minipage}{.11\textwidth}
\includegraphics[width=.99\textwidth]{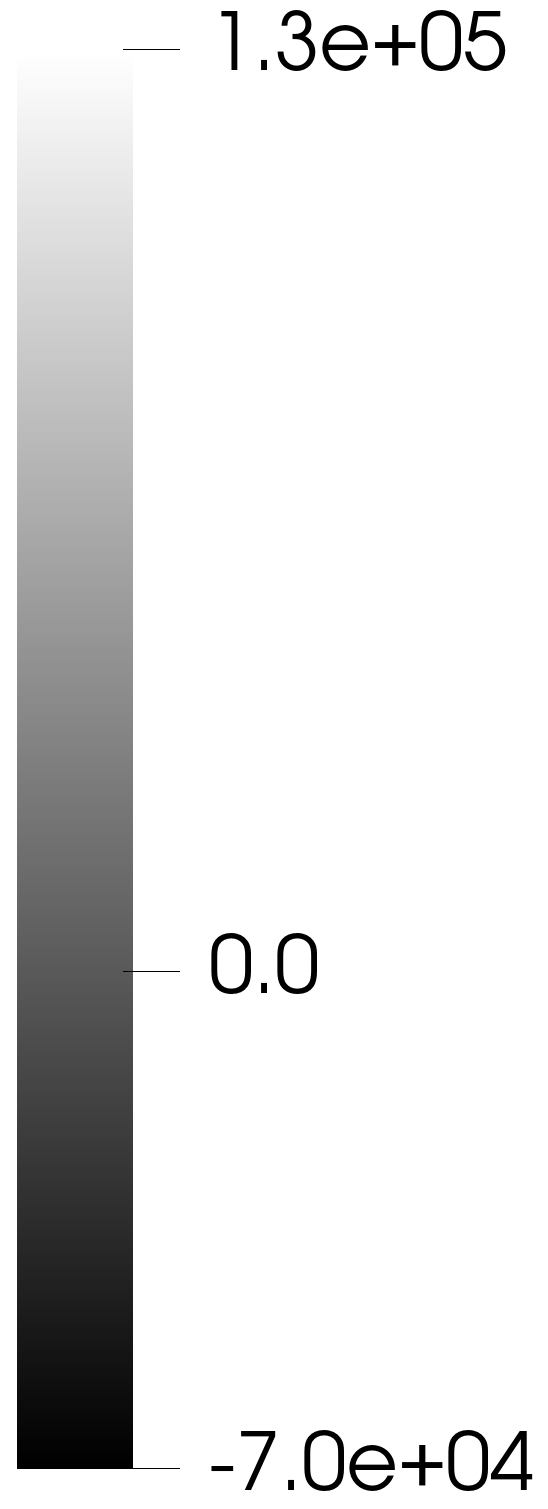}
\end{minipage}}
\\[.8cm] 
\begin{minipage}{.3\textwidth}
\includegraphics[trim=0 0.5\imagewidth{} 0.5\imagewidth{} 0, clip, width = .49\textwidth]{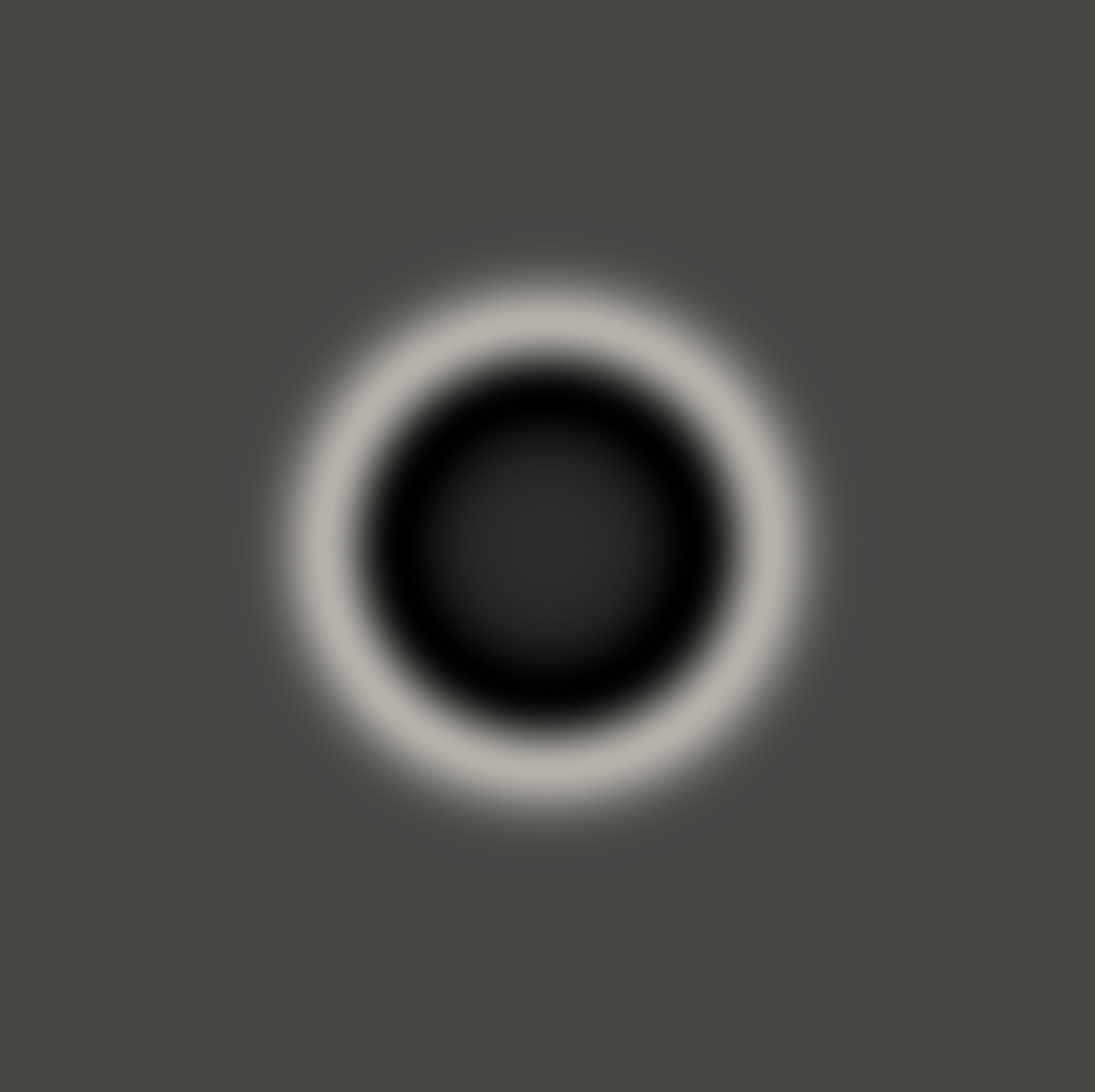}\hspace{-.1cm}
\includegraphics[trim=0.5\imagewidth{} 0.5\imagewidth{} 0 0, clip, width = .49\textwidth]{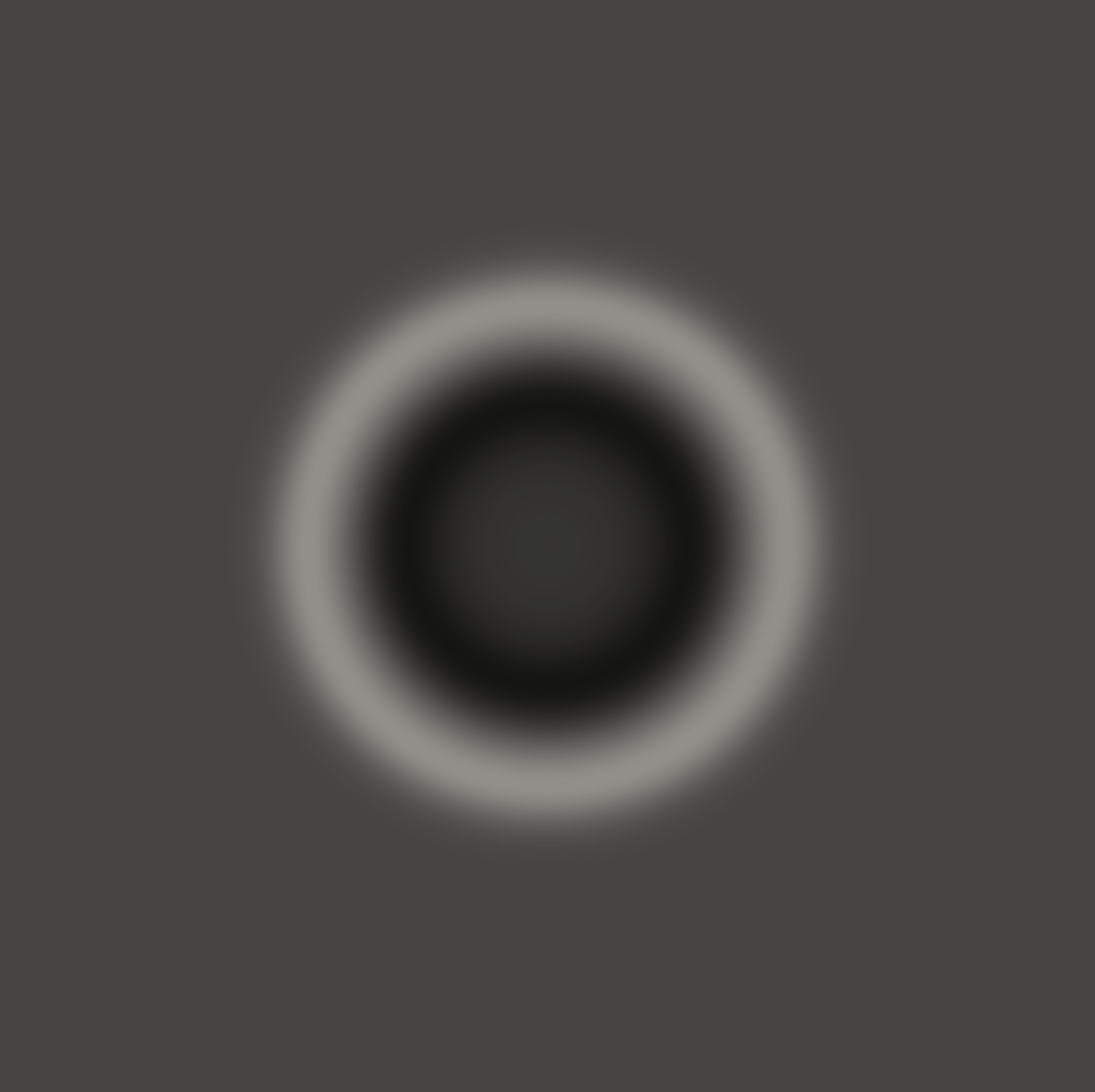}\\[0cm]
\includegraphics[trim=0 0 0.5\imagewidth{} 0.5\imagewidth{}, clip, width = .49\textwidth]{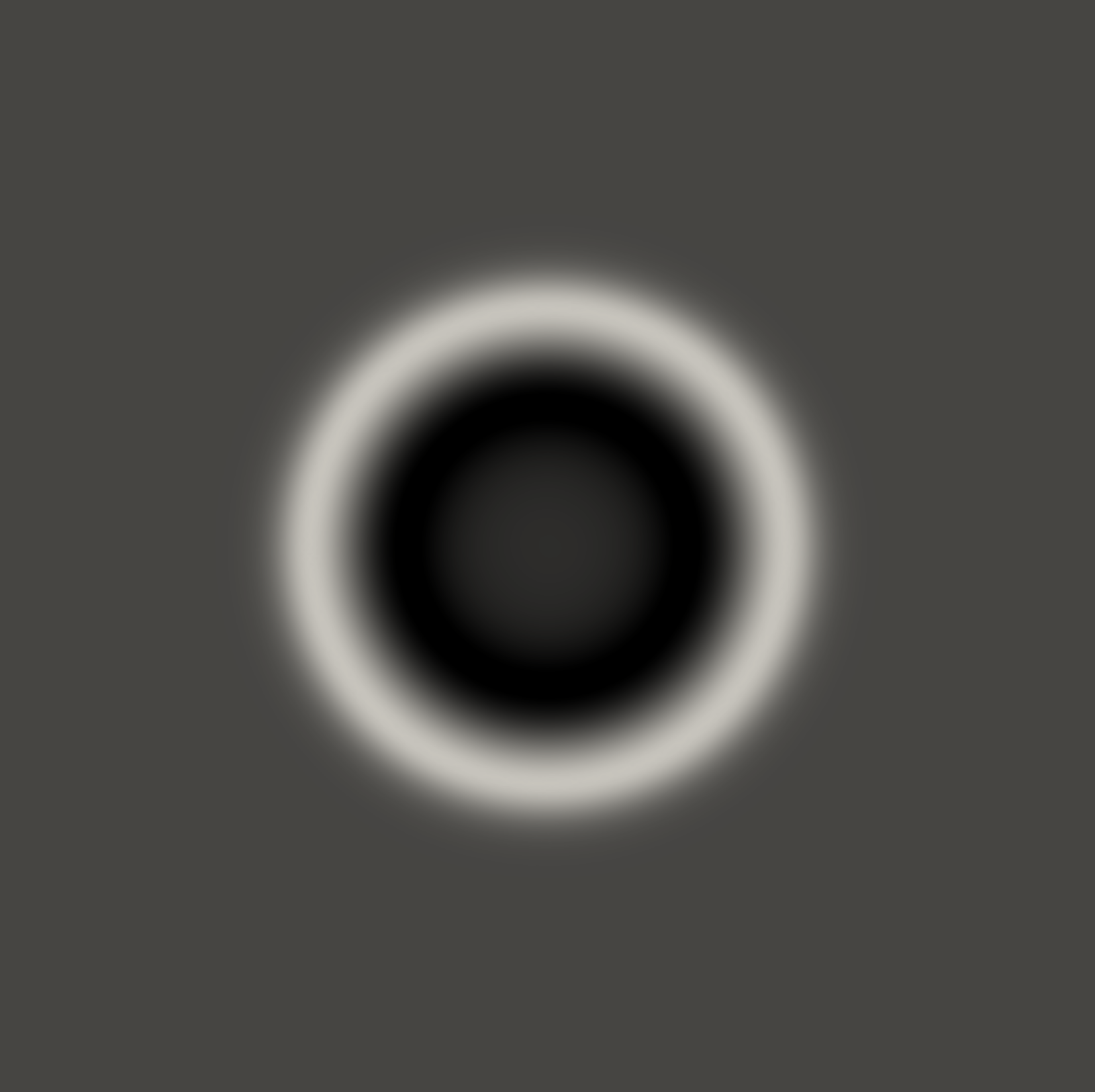}\hspace{-.1cm}
\includegraphics[trim=0.5\imagewidth{} 0 0 0.5\imagewidth{}, clip, width = .49\textwidth]{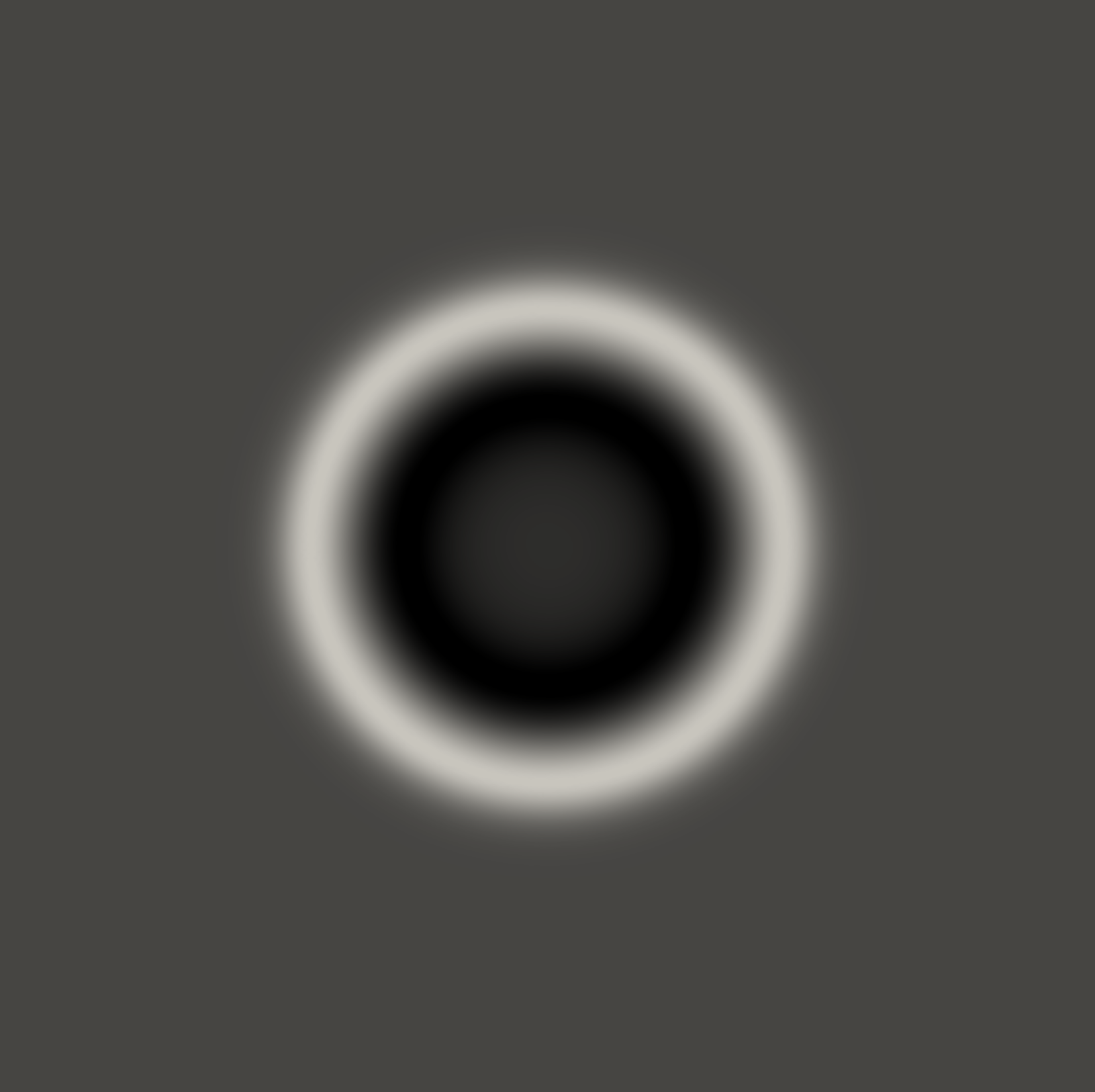}
\end{minipage}\hspace{.8cm}
\begin{minipage}{.3\textwidth}
\includegraphics[trim=0 0.5\imagewidth{} 0.5\imagewidth{} 0, clip, width = .49\textwidth]{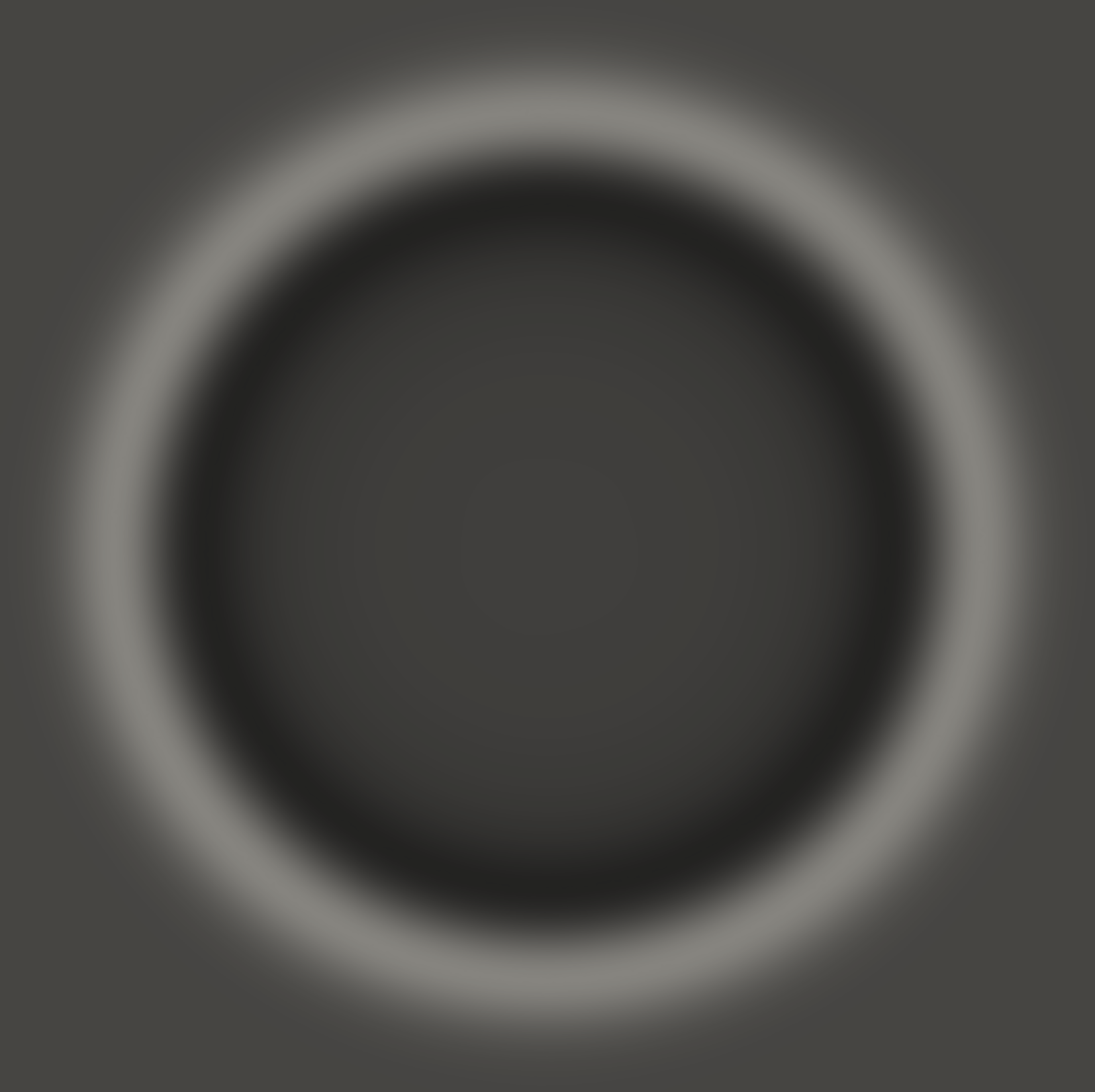}\hspace{-.1cm}
\includegraphics[trim=0.5\imagewidth{} 0.5\imagewidth{} 0 0, clip, width = .49\textwidth]{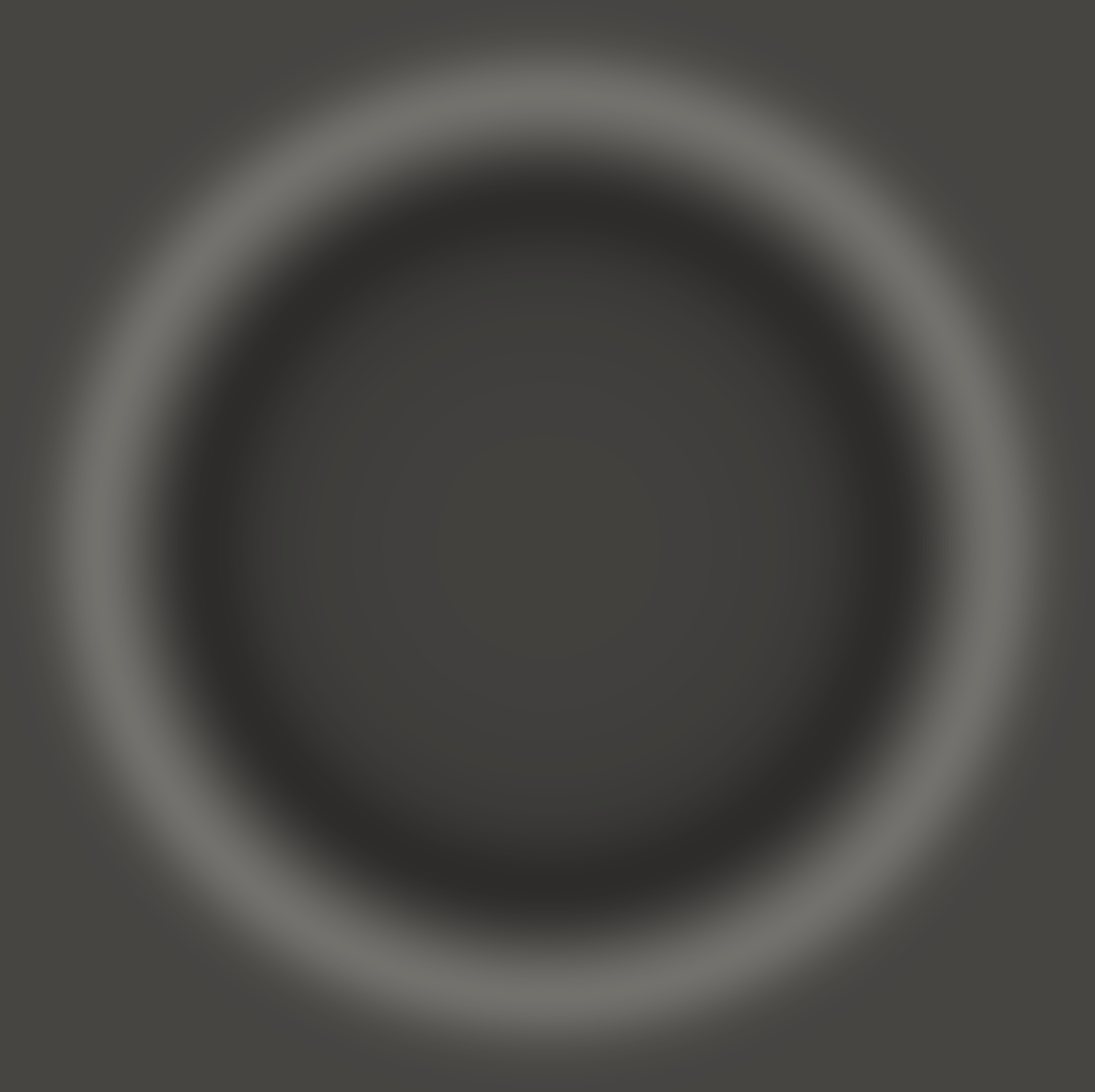}\\[0cm]
\includegraphics[trim=0 0 0.5\imagewidth{} 0.5\imagewidth{}, clip, width = .49\textwidth]{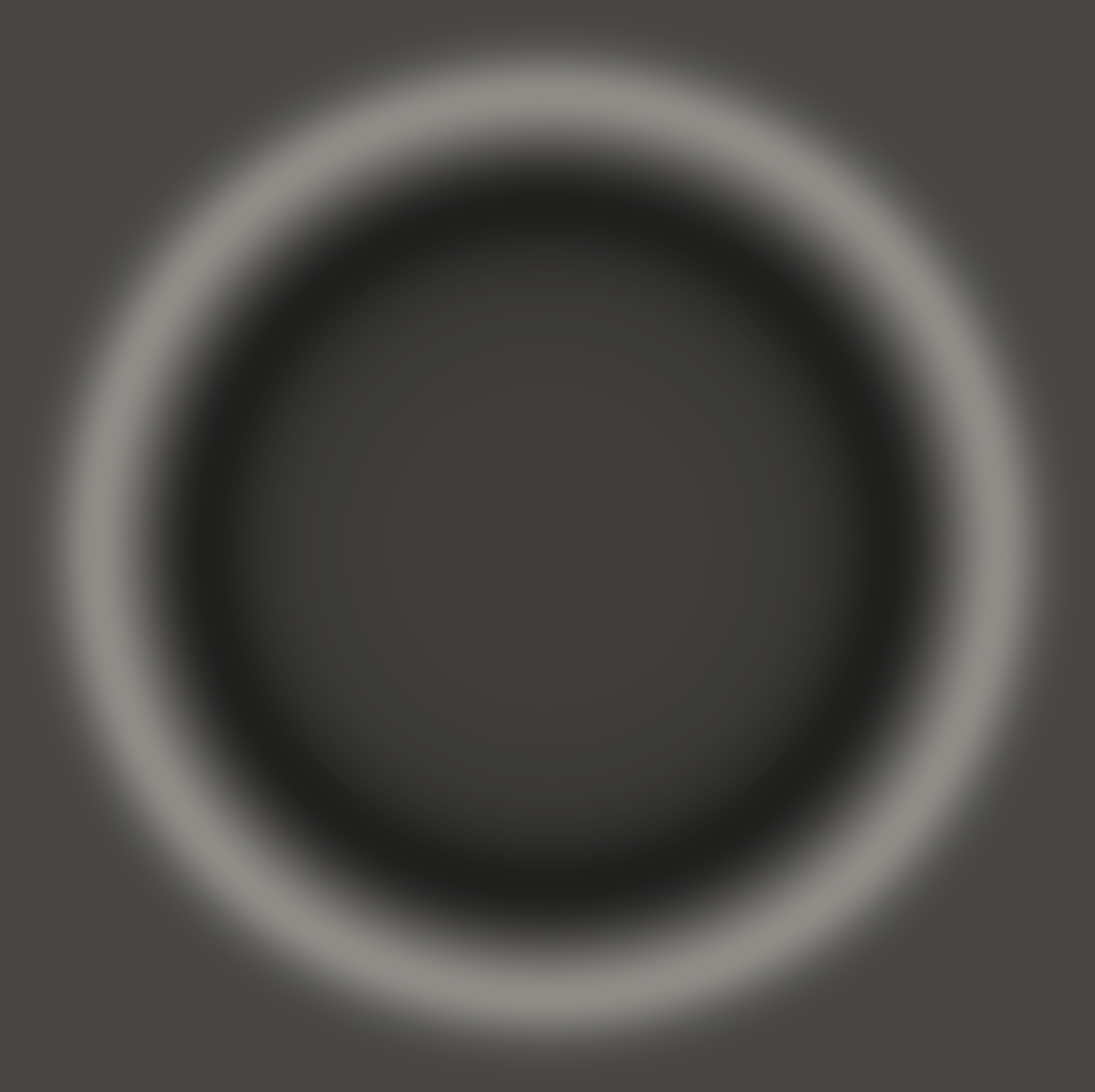}\hspace{-.1cm}
\includegraphics[trim=0.5\imagewidth{} 0 0 0.5\imagewidth{}, clip, width = .49\textwidth]{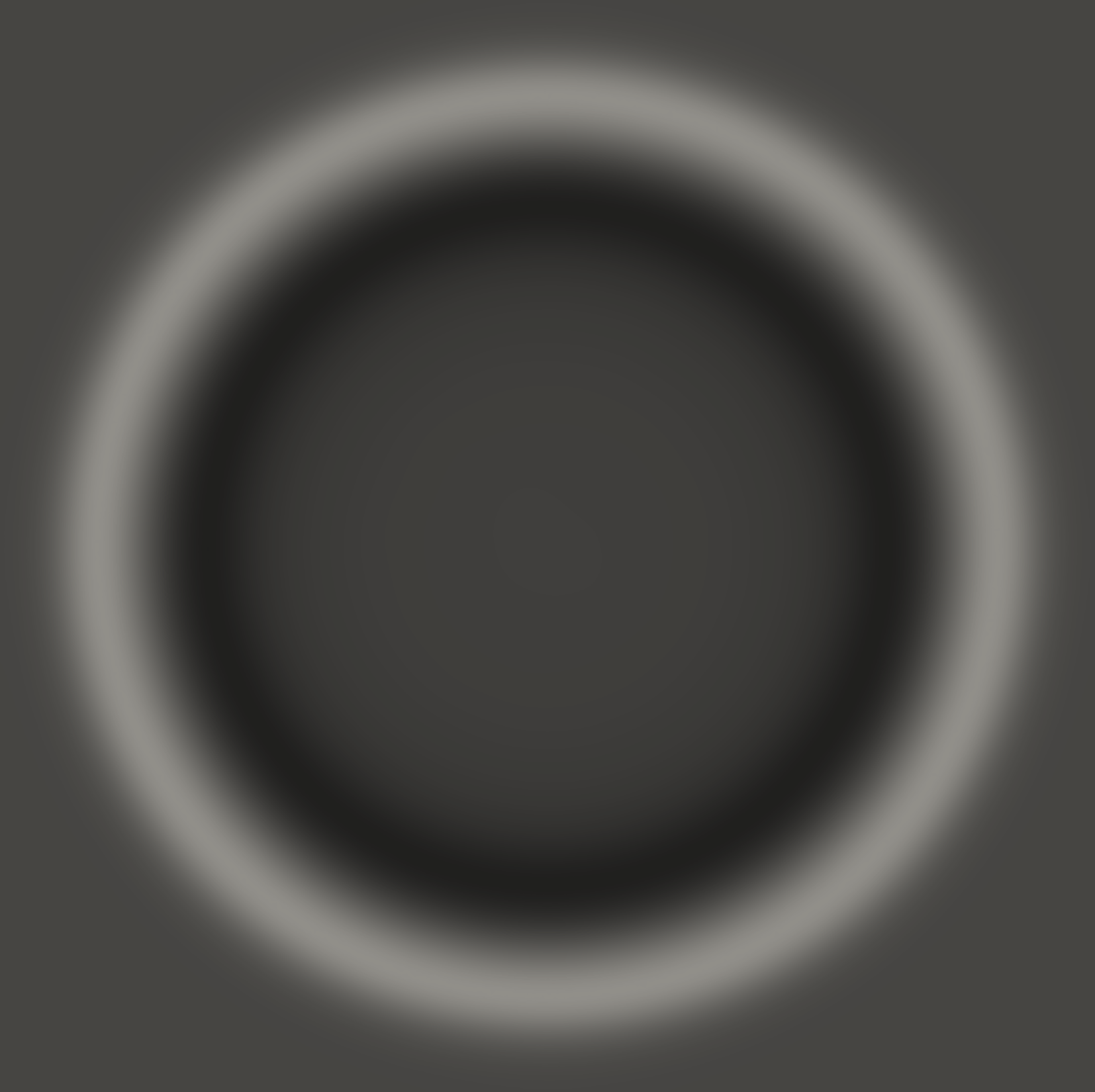}
\end{minipage}\hspace{.8cm} 
\begin{minipage}{.11\textwidth}
\includegraphics[width=.99\textwidth]{2D/color2.png}
\end{minipage}
\caption{Linear (upper left) vs Westervelt (upper right) vs Kuznetsov (lower left) vs Rasmussen (lower right) for $t \in \{2.5,5,10,20\}\cdot 10^{-5}$.\label{Fig:2D}}
\end{figure}

\section{Conclusion and outlook}

In this paper, we analyzed a class of models for nonlinear acoustics, including the Westervelt and Kuznetsov equations, as well as a thermodynamically consistent modification of the latter proposed by Rasmussen. 
By employing linearization, energy estimates, and fixed-point arguments, we have demonstrated the existence and uniqueness of solutions for these models.
Our results show that, for sufficiently small initial data, the solutions are global in time and converge exponentially fast to equilibrium. 
While the Kuznetsov and Rasmussen models require similar implementation effort and yield very similar results in simulations, the latter enjoys thermodynamic consistency, allowing to guarantee energy-dissipation on the continuous and discrete level. This key property is directly encoded in the port-Hamiltonian structure of the weak form of the velocity-enthalpy formulation. 

We showed that the geometric structure, encoded in the weak form of the problem, can be preserved by conforming Galerkin approximation, e.g., by appropriate mixed finite elements. We further indicated that fully--discrete structure-preserving discretization schemes can be obtained by variational time-discretization schemes. 
Energy-decay was, however, also observed for simulations based on the implicit midpoint rule. For one- and two-dimensional problems, the fully-implicit time-stepping schemes turned out to be very stable and computationally effective. The Newton method converged consistently within a few iterations, demonstrating both the efficiency and reliability of the overall discretization approach.

In closing this section, we mention some directions for future research. 
A natural extension of the models would be to include dispersive effects, which are critical to accurately capturing the behavior of acoustic waves in various media.
Apart from establishing mathematical well-posedness, the guarantee of thermodynamic consistency should be a particular focus. 
Another direction would be a thorough analysis of discretization schemes for \eqref{eq:sys1}--\eqref{eq:sys4}, including variational time-discretization and a full error analysis. 
Establishing $L^\infty$ bounds for the discrete solutions and studying the long-time behavior would be further topics of interest. 

\section*{Acknowledgement}
M.F.~gratefully acknowledges the support of the State of Upper Austria. 

\bibliography{literature.bib}

\end{document}